\newtheorem{theorem}{Theorem}[section]
\newtheorem{lemma}[theorem]{Lemma}
\newtheorem{prop}[theorem]{Proposition}
\newtheorem{corollary}[theorem]{Corollary}
\theoremstyle{definition}
\newtheorem{definition}[theorem]{Definition}
\newtheorem{rem}[theorem]{Remark}
\newtheorem{example}[theorem]{Example}
\newcommand\pf{\begin{proof}}
\newcommand\epf{\end{proof}}
\newcommand\tu{\textup}
\numberwithin{equation}{section}
\title[Quotients in super-symmetry]
{Quotients in super-symmetry: \\ formal supergroup case}
\author[Y.~Takahashi]{Yuta Takahashi}
\address{Yuta Takahashi,
Graduate School of Pure and Applied Sciences, 
University of Tsukuba, Ibaraki 305-8571, Japan}
\email{y-takahashi@math.tsukuba.ac.jp}
\author[A.~Masuoka]{Akira Masuoka}
\address{Akira Masuoka,
Department of Mathematics, 
University of Tsukuba, 
Ibaraki 305-8571, Japan}
\email{akira@math.tsukuba.ac.jp}
\begin{document}

\begin{abstract}
We describe the structure of the quotient $\mathfrak{G}/\mathfrak{H}$ of a formal supergroup $\mathfrak{G}$ by its formal sub-supergroup $\mathfrak{H}$. 
This is a consequence which arises as a continuation of the authors' work 
(partly with M. Hashi) on algebraic/analytic supergoups.
The results are presented and proved in terms 
of super-cocommutative Hopf superalgebras. The notion of 
co-free super-coalgebras plays a role, in particular. 
\end{abstract}

\maketitle

\noindent
{\sc Key Words:}
Hopf superalgebra, formal supergroup, supersymmetry, co-free, smooth

\medskip
\noindent
{\sc Mathematics Subject Classification (2020):}
16T05, 
14L14, 
14M30 

\section{Introduction}\label{Sec1}

\subsection{Background and objective}
The quotient $\mathfrak{G}/\mathfrak{H}$ of a group $\mathfrak{G}$ by its subgroup $\mathfrak{H}$ 
is the set of all $\mathfrak{H}$-orbits in $\mathfrak{G}$. This simple fact for abstract groups immediately turns
into a difficult question if we replace abstract groups with groups with geometric structure, such as algebraic
or analytic groups. Does the question become even more difficult when we consider those geometric groups 
in the generalized supersymmetric context, or namely, supergroups? 
The authors want to answer that it is not so difficult as was supposed, not so much more than that
for geometric (non-super) groups. For we can describe the structure
of the quotient $\mathfrak{G}/\mathfrak{H}$ for supergroups in terms of the non-super quotient 
$\mathfrak{G}_{\operatorname{ev}}/\mathfrak{H}_{\operatorname{ev}}$, where 
$\mathfrak{G}_{\operatorname{ev}}$ (resp., $\mathfrak{H}_{\operatorname{ev}}$) is the geometric (non-super)
group naturally associated with the supergroup $\mathfrak{G}$ (resp., $\mathfrak{H}$). 
Indeed, such a description has been given by the authors \cite[Theorem 4.12, Remark 4.13]{MT} for affine algebraic supergroups (see also \cite[Sect.\! 14.2]{MZ1} for non-affine case)
and by Hoshi and the authors \cite[Theorem 7.3, Remark 7.4 (1)]{HMT} for analytic supergroups. 

A main objective of this paper is to give such a description for formal supergroups. 
Those formal supergroups (or resp., formal superschemes) form a category which is equivalent to the category of super-cocommutative Hopf
superalgebras (resp., super-coalgebras), as is shown in Appendix below. 
In the text the results all are presented in terms of
such Hopf superalgebras and super-coalgebras, some of which will be translated into the language of formal super-objects in the Appendix.

\subsection{Main results}
Throughout in this paper we work over a fixed base field $\Bbbk$ whose characteristic 
$\operatorname{char} \Bbbk$ is not 2.
The unadorned $\otimes$ denotes the tensor product over $\Bbbk$. 
Except at the beginning part of the following Section 2 we always assume super-coalgebras and Hopf superalgebras to be super-cocommutative. 
Compare this situation with that in \cite{MT}, where the authors discussed super-commutative Hopf superalgebras
in order to investigate affine algebraic supergroups; some arguments of the present paper are in fact dual to
those of the cited article.

Let $\mathfrak{G}=\operatorname{Sp}^*(\mathbb{J})$ be a formal superscheme; the notation means
that it corresponds to a super-coalgebra $\mathbb{J}$. It is known that $\mathbb{J}$ includes the largest ordinary subcoalgebra, say, $J$. The formal scheme $\mathfrak{G}_{\mathrm{ev}}$ naturally associated to $\mathfrak{G}$
is the one corresponding to $J$, or in notation, $\mathfrak{G}_{\mathrm{ev}}=\operatorname{Sp}^*(J)$.
Suppose that $\mathfrak{G}$ is a formal supergroup, which means that $\mathbb{J}$ is a Hopf superalgebra.
Then $J$ is a Hopf subalgebra of $\mathbb{J}$. Let $\mathfrak{H}=\operatorname{Sp}^*(\mathbb{K})$
be a formal sub-supergroup of $\mathfrak{G}$, or equivalently, let $\mathbb{K}$ be a Hopf sub-superalgebra 
of $\mathbb{J}$. The associated formal group $\mathfrak{H}_{\mathrm{ev}}=\operatorname{Sp}^*(K)$
is then a formal subgroup of $\mathfrak{G}_{\mathrm{ev}}$, or equivalently, $K$ is a Hopf subalgebra of $J$.
The quotient formal superscheme $\mathfrak{G}/\mathfrak{H}$ corresponds to the quotient super-coalgebra
\[
\mathbb{J}/\hspace{-1mm}/\mathbb{K}:=\mathbb{J}/\mathbb{J}\mathbb{K}^+
\]
of $\mathbb{J}$ by the super-coideal $\mathbb{J}\mathbb{K}^+$ which is generated, as a left super-ideal, by
the augmentation super-ideal $\mathbb{K}^+$ of $\mathbb{K}$; or in notation, 
\[
\mathfrak{G}/\mathfrak{H}=\operatorname{Sp}^*(\mathbb{J}/\hspace{-1mm}/\mathbb{K}). 
\]
The associated formal scheme $(\mathfrak{G}/\mathfrak{H})_{\mathrm{ev}}$ is proved to be naturally isomorphic to
$\mathfrak{G}_{\mathrm{ev}}/\mathfrak{H}_{\mathrm{ev}}$ (see Corollary \ref{C41}), whence we have
\[
(\mathfrak{G}/\mathfrak{H})_{\mathrm{ev}}
=\operatorname{Sp}^*(J/\hspace{-1mm}/K). 
\]

Let $\mathsf{V}_{\mathbb{J}}=P(\mathbb{J})_1$ denote the vector space consisting of all 
odd primitive elements of $\mathbb{J}$, which is in fact a left $J$-module under the adjoint $J$-action.
Similarly we have the left $K$-module $\mathsf{V}_{\mathbb{K}}=P(\mathbb{K})_1$ consisting of all odd primitive elements
of $\mathbb{K}$, which is seen to be a $K$-submodule of $\mathsf{V}_{\mathbb{J}}$, so that we have the quotient
$K$-module 
$
\mathsf{Q}:=\mathsf{V}_{\mathbb{J}}/\mathsf{V}_{\mathbb{K}}
$
of $\mathsf{V}_{\mathbb{J}}$ by $\mathsf{V}_{\mathbb{K}}$. 
A main result of ours (Theorem \ref{T41} (1)) states that there is an isomprphism 
\begin{equation}\label{E11}
J\otimes_K\wedge(\mathsf{Q})\simeq \mathbb{J}/\hspace{-1mm}/\mathbb{K}
\end{equation}
of super-coalgebras, which turns into
\begin{equation}\label{E12}
J/\hspace{-1mm}/K\otimes \wedge(\mathsf{Q})\simeq \mathbb{J}/\hspace{-1mm}/\mathbb{K},
\end{equation}
in many special cases including the case where $\mathbb{J}$ is pointed; see Proposition \ref{P43}. 
Notice that the exterior algebra $\wedge(\mathsf{Q})$ is naturally a Hopf superalgebra in which every
element of $\mathsf{Q}$ is odd primitive, and it is regarded naturally as a left $K$-module super-coalgebra for  \eqref{E11}, and simply as a super-coalgebra for \eqref{E12}. 

A description of  $\mathbb{J}/\hspace{-1mm}/\mathbb{K}$ which looks similar to \eqref{E12} is given by
\cite[Proposition 3.14 (1)]{M2} under the assumption that $\mathbb{J}$ is irreducible, or namely,
the coradical of $\mathbb{J}$ is $\Bbbk 1$. But, our method of the proof is more conceptual, which uses the
notion of \emph{co-free super-coalgebras} (see Definition \ref{D32} for precise definition) and proves on the course 
that $J\otimes_K\wedge(\mathsf{Q})$ and $J/\hspace{-1mm}/K\otimes \wedge(\mathsf{Q})$ are co-free; see Proposition \ref{P42} and Example \ref{Ex32}. Furthermore, we obtain from \eqref{E11} 
the smoothness criteria (Theorem \ref{T42}):
(1)\ $\mathbb{J}/\hspace{-1mm}/\mathbb{K}$ is smooth if and only if so is $J/\hspace{-1mm}/K$, 
(2)\ The equivalent conditions are necessarily satisfied if $\operatorname{char}\Bbbk =0$. 

Throughout we will use Hopf-algebraic techniques; in particular, a key step to prove \eqref{E11} is 
to show a Maschke-type result such as found in \cite{D}; see Lemma \ref{L41}.  The techniques
are so useful that we will work mostly in the Hopf-algebra language, not in the group-theoretical one. 
They also make it easier to investigate the associated graded object. Indeed, we obtain a description 
of the graded coalgebra $\operatorname{gr}(\mathbb{J}/\hspace{-1mm}/\mathbb{K})$
associated to $\mathbb{J}/\hspace{-1mm}/\mathbb{K}$; see Theorem \ref{T41} (2). 

\subsection{Organization of the paper}
Section 2 is devoted to preliminaries, which contain basic definitions and results on supersymmetry
and Hopf superalgebras. 
Section 3 discusses co-free super-coalgebras. Section 4 presents main results and their proofs. 
Appendix discusses formal superschemes and supergroups from scratch, and translates some of the
results obtained in the text into the language of those formal super-objects. 

\section{Preliminaries}\label{Sec2}


\subsection{Supersymmetry}\label{Subsec2.1}
A \emph{super-vector space} is a vector space $V=V_{0} \oplus V_{1}$ (over $\Bbbk$) graded by the order-2 group $\mathbb{Z}_{2}=\{ 0, 1 \}$.
It is said to be \emph{purely even} (resp., \emph{purely odd}) if $V=V_{0}$ (resp., $V=V_{1}$).
The super-vector spaces $V, W, \dots$ form a monoidal category,
\[ \mathsf{SMod}_{\Bbbk} = (\mathsf{SMod}_{\Bbbk},\ \otimes,\ \Bbbk), \]
with respect to the natural tensor product $\otimes$ over $\Bbbk$, and the unit object $\Bbbk$ which is supposed to be purely even.
This monoidal category is symmetric with respect to the \emph{supersymmetry}
\begin{equation}\label{E21}
c_{V, W}: V \otimes W \overset{\simeq}{\longrightarrow} W \otimes V,\quad v \otimes w \mapsto (-1)^{|v| |w|} w \otimes v,
\end{equation}
where $v \in V$ and $w \in W$ are homogeneous elements with the parities (or degrees) $|v|, |w|$, respectively.

Objects, such as (co)algebra, Hopf algebra or Lie algebra, constructed in $\mathsf{SMod}_{\Bbbk}$ are called so as super-(co)algebra, Hopf superalgebra or Lie superalgebra, with ``super'' thus attached.
Ordinary objects, such as ordinary (co)algebra, \dots, are regarded as purely even super-objects.

If $\operatorname{char} \Bbbk = 3$, a Lie superalgebra $\mathfrak{g}$ is required to satisfy
\[ [v,[v,v]] = 0,\quad v \in \mathfrak{g}_{1}, \]
in addition to the alternativity and the Jacobi identity formulated in the super context.

Useful references for algebra and geometry in the supersymmetric context include Carmeli et al. \cite{CCF} and Manin \cite{Manin}. 

\subsection{Super-coalgebras and Hopf superalgebras}\label{Subsec2.2}
In what follows super-coalgebras are assumed to be super-cocommutative.
Accordingly, Hopf superalgebras are so, as well.

For a super-coalgebra $\mathbb{J}$, the structure is denoted by $\Delta_{\mathbb{J}}: \mathbb{J} \to \mathbb{J} \otimes \mathbb{J}, \varepsilon_{\mathbb{J}}: \mathbb{J} \to \Bbbk$, or simply by $\Delta, \varepsilon$.
The coproduct $\Delta$ is presented so as
\[ \Delta(a) = a_{(1)} \otimes a_{(2)},\quad a \in \mathbb{J} \]
by this variant of the Heynemann-Sweedler notation.
The super-cocommutativity assumption is presented by $\Delta = c_{\mathbb{J}, \mathbb{J}} \circ \Delta$.
In case $\mathbb{J}$ is a Hopf superalgebra, the antipode is denoted by $S_{\mathbb{J}}$ or $S$.

A \emph{graded coalgebra} is a non-negatively graded coalgebta $\mathcal{J} = \bigoplus_{n \geq 0} \mathcal{J}(n)$ which, regarded as a super-coalgebra $\mathcal{J} = \mathcal{J}_{0} \oplus \mathcal{J}_{1}$ with respect to the parity
\[
\mathcal{J}_{i} = \left\{
\begin{array}{cl}
\bigoplus\limits_{n~\mathrm{even}} \mathcal{J}(n), & i=0, \\
\bigoplus\limits_{n~\mathrm{odd}} \mathcal{J}(n), & i=1
\end{array}
\right.
\]
is super-cocommutative.
A \emph{graded Hopf superalgebra} is a graded super-coalgebra, equipped with a graded-algebra structure, which is a Hopf superalgebra with respect to the parity as above.

\subsection{The exterior algebra}\label{Subsec2.3}
Let $\mathsf{V}$ be a purely odd super-vector space.

The exterior algebra $\wedge(\mathsf{V}) = \bigoplus_{n \geq 0} \wedge^{n}(\mathsf{V})$ on $\mathsf{V}$ is a graded algebra, and is in fact a graded Hopf superalgebra in which every element $v \in \mathsf{V}\, (=\wedge^{1}(\mathsf{V}))$ is primitive, $\Delta(v) = 1\otimes v + v \otimes 1$; this is super-commutative as well as super-cocommutative.

In what follows, $\wedge(\mathsf{V})$ will be often regarded only as a graded coalgebra or a super-coalgebra.
To give an alternative description of that structure, first notice that given an integer $n > 1$, the symmetric group $\mathfrak{S}_{n}$ of degree $n$ acts on the $n$-fold tensor product $\mathsf{V}^{\otimes n}$ of $\mathsf{V}$, so that the $i$th fundamental transposition $(i, i+1)$ acts as
\[ \mathrm{id}_{\mathsf{V}}^{\otimes (i-1)} \otimes c_{\mathsf{V}, \mathsf{V}} \otimes \mathrm{id}_{\mathsf{V}}^{\otimes (n-i-1)}, \]
where $1 \leq i <n$.
We suppose that $\mathfrak{S}_n$ acts on the right, so that the action is given explicitly by
\[
(v_1\otimes v_2\otimes \cdots \otimes v_n)^{\sigma}=  \mathrm{sgn}(\sigma)\, 
v_{\sigma(1)}\otimes v_{\sigma(2)}\otimes\cdots \otimes v_{\sigma(n)},
\]
where $\sigma \in \mathfrak{S}_n$ and $v_i \in \mathsf{V}$, $1 \le i \le n$.
Let
\begin{equation}\label{E22}
A^{n}(\mathsf{V}):=(\mathsf{V}^{\otimes n})^{\mathfrak{S}_{n}}
\end{equation}
denote the subspace of $\mathsf{V}^{\otimes n}$ consisting of all $\mathfrak{S}_{n}$-invariants.
The \emph{symmetrizer}
\begin{equation}\label{E23}
\mathrm{sym}_{n}: \wedge^{n}(\mathsf{V}) \overset{\simeq}{\longrightarrow} A^{n}(\mathsf{V}),\quad v_{1} \wedge \cdots \wedge v_{n} \mapsto \sum_{\sigma \in \mathfrak{S}_n} (v_{1} \otimes \cdots \otimes v_{n})^{\sigma}
\end{equation}
gives a linear isomorphism; this is to be called the \emph{anti-symmetrizer} ordinarily in the non-super situation.
Let $A^{0}(\mathsf{V}) := \Bbbk\, (= \wedge^{0}(\mathsf{V}))$,\ $A^{1}(\mathsf{V}) := \mathsf{V}\, (= \wedge^{1}(\mathsf{V}))$, and set
\[ A(\mathsf{V}) := \bigoplus_{n \geq 0} A^{n}(\mathsf{V}). \]
For $n \geq 0$ and $0 \leq i \leq n$, we let
\begin{equation}\label{E23a}
\delta_{i, n-i}: A^{n}(\mathsf{V}) \to A^{i}(\mathsf{V}) \otimes A^{n-i}(\mathsf{V})
\end{equation}
denote the injection restricted from the canonical linear isomorphism $\mathsf{V}^{\otimes n} \overset{\simeq}{\longrightarrow} \mathsf{V}^{\otimes i} \otimes \mathsf{V}^{\otimes(n-i)}$.
The graded-coalgebra structure of $\wedge(\mathsf{V})$ is transferred through $\mathrm{sym}_{n}$ to $A(\mathsf{V})$
as follows: 
the coproduct is transferred to $\Delta: A(\mathsf{V}) \to A(\mathsf{V}) \otimes A(\mathsf{V})$ defined by
\[ \Delta(a) = \sum_{0 \leq i \leq n} \delta_{i, n-i}(a),\quad a \in A^{n}(\mathsf{V}),\ \, n \geq 0, \]
while the counit is to the projection $A(\mathsf{V}) \to A^{0}(\mathsf{V}) = \Bbbk$.
Here, as for $\Delta$ one should notice the formula
\[ 
\begin{split}
&\mathrm{sym}_{n}(v_{1} \wedge \cdots \wedge v_{n} ) =\\ 
&\sum_{\sigma} \mathrm{sgn}(\sigma)\, \mathrm{sym}_{i}(v_{\sigma(1)} \wedge \cdots \wedge v_{\sigma(i)}) \otimes \mathrm{sym}_{n-i}(v_{\sigma(i+1)} \wedge \cdots \wedge v_{\sigma(n)}), 
\end{split}
\]
where $\sigma$ runs over the $(i, n-i)$-shuffles, i.e., the permutations such that $\sigma(1) <\cdots< \sigma(i)$ 
and $\sigma(i+1) <\cdots< \sigma(n)$.
We can thus identify so as
\begin{equation}\label{E24}
\wedge(\mathsf{V}) = A(\mathsf{V})
\end{equation}
as graded coalgebras.

\subsection{The associated graded coalgebra}\label{Subsec2.4}
Let $\mathbb{J}$ be a super-coalgebra.
The pullback
\begin{equation}\label{E25}
J := \Delta^{-1} (\mathbb{J}_{0} \otimes \mathbb{J}_{0})
\end{equation}
of $\mathbb{J}_{0} \otimes \mathbb{J}_{0}\, (\subset \mathbb{J} \otimes \mathbb{J})$ along the coproduct is the largest purely even sub-super-coalgebra of $\mathbb{J}$; see \cite[Sect.~3]{M2}.
Let $F_{-1}\mathbb{J} = 0$,\ $F_{0} \mathbb{J} = J$.
For every integer $n>0$, let
\[ F_{n}\mathbb{J} := \mathrm{Ker}(\mathbb{J} \xrightarrow{\Delta_{n}} \mathbb{J}^{\otimes (n+1)} \rightarrow (\mathbb{J}/J)^{\otimes (n+1)}) \]
be the kernel of the composite of the $n$-iterated coproduct of $\mathbb{J}$ with the natural projection onto $(\mathbb{J}/J)^{\otimes (n+1)}$.
Then $\mathbb{J}$ is filtered (see \cite[Sect. 11.1]{Sw}), or more precisely, we have an ascending chain
\[ J = F_{0}\mathbb{J} \subset F_{1}\mathbb{J} \subset F_{2}\mathbb{J} \subset \cdots \]
of sub-super-coalgebras of $\mathbb{J}$, such that
\[ \mathbb{J} = \bigcup_{n \geq 0} F_{n}\mathbb{J},\quad \Delta(F_{n}\mathbb{J}) \subset \sum_{0 \leq i \leq n} F_{i}\mathbb{J} \otimes F_{n-i}\mathbb{J}. \]
The associated graded coalgebra
\[ \operatorname{gr}\mathbb{J} := \bigoplus_{n \geq 0}F_{n}\mathbb{J} / F_{n-1}\mathbb{J} \]
is indeed super-cocommutative.

Suppose that $\mathbb{J}$ is a Hopf superalgebra.
Then $J$ is a purely even Hopf superalgebra of $\mathbb{J}$, and we have $(F_{n}\mathbb{J})(F_{m}\mathbb{J}) \subset F_{n+m}\mathbb{J}$,\ $n,m \geq 0$.
Moreover, $\operatorname{gr}\mathbb{J}$ turns into a graded Hopf superalgebra.
The super-vector space
\begin{equation}\label{E26}
\operatorname{P}(\mathbb{J}) = \{\, v \in \mathbb{J} \mid \Delta(v) = 1 \otimes v + v \otimes 1 \, \}
\end{equation}
consisting of all primitive elements in $\mathbb{J}$ form a Lie superalgebra with respect to the super-commutator $[v,w] := vw - (-1)^{|v| |w|} wv$,
where $v$ and $w$ are homogeneous elements of 
$\operatorname{P}(\mathbb{J})$.
Define
\begin{equation}\label{E26a}
\mathsf{V}_{\mathbb{J}} := \operatorname{P}(\mathbb{J})_{1}, 
\end{equation}
the odd component of the Lie superalgebra.
This is stable under the right (resp., left) adjoint $J$-action
\begin{equation}\label{E27}
v \triangleleft a := S(a_{(1)}) v a_{(2)}\quad \text{ (resp., } a \triangleright v := a_{(1)} v S(a_{(2)})),
\end{equation}
where $a \in J$ and $v \in \mathsf{V}_{\mathbb{J}}$, whence it turns into a right (resp., left) purely odd $J$-supermodule.
Notice that $\mathbb{J}$ is a \emph{$J$-ring}, by which we mean an algebra equipped with an algebra map from $J$.
With an arbitrarily chosen, totally ordered basis $X=(x_{\lambda})_{\lambda \in \Lambda}$ of 
$\mathsf{V}_{\mathbb{J}}$,\
$\mathbb{J}$ is, as a $J$-ring, generated by $X$, and defined by the relations
\begin{equation}\label{E27a}
x_{\lambda}a = a_{(1)} (x_{\lambda} \triangleleft a_{(2)}),\quad x_{\lambda}x_{\mu} = -x_{\mu}x_{\lambda} + [x_{\lambda}, x_{\mu}], 
\end{equation}
where $a \in J$ and $\lambda > \mu$ in $\Lambda$.
Moreover, the linear map
\begin{equation}\label{E28}
\phi_{X}: J \otimes \wedge(\mathsf{V}_{\mathbb{J}}) \to \mathbb{J}
\end{equation}
defined by $\phi_{X}(a \otimes 1) = a$ and
\[ \phi_{X}(a \otimes (x_{\lambda_{1}} \wedge \cdots \wedge x_{\lambda_{n}})) = ax_{\lambda_{1}} \cdots x_{\lambda_{n}}, \]
where $n>0$ and $\lambda_{1} < \cdots < \lambda_{n}$ in $\Lambda$, is a left $J$-linear isomorphism of super-coalgebras; see \cite[Theorem 3.6]{M2}. In particular, $J \otimes \mathsf{V}_{\mathbb{J}}$ is naturally included in
$\mathbb{J}$,
\begin{equation}\label{E28a}
J \otimes \mathsf{V}_{\mathbb{J}}\subset \mathbb{J}. 
\end{equation}

\begin{rem}\label{R21}
Theorem 10 of \cite{M3} proves a category equivalence between (super-cocommutative) Hopf superalgebras
and dual Harish-Chandra pairs. We can make $\phi_X$ into a natural isomorphism of Hopf superalgebras, 
giving to $J \otimes \wedge(\mathsf{V}_{\mathbb{J}})$ some additional structures that are obtained from
the dual Harish-Chandra pair corresponding to $\mathbb{J}$. But, such a
more precise description of $\mathbb{J}$ will not be needed in the sequel. 
\end{rem}

Keep the notation as above. The category 
\[ \mathsf{SMod}_{J} = (\mathsf{SMod}_{J},\ \otimes,\ \Bbbk) \]
of right $J$-supermodules is a monoidal category, which is symmetric with respect to the supersymmetry \eqref{E21}
due to the cocommutativity of $J$.
The Hopf superalgebra $\wedge(\mathsf{V}_{\mathbb{J}})$ is in fact a Hopf algebra in $\mathsf{SMod}_{J}$ with respect to the $J$-action arising from the right adjoint action.
The associated smash product $J \ltimes \wedge(\mathsf{V}_{\mathbb{J}})$ is identified with $\operatorname{gr}\mathbb{J}$, since the second relation of \eqref{E27a} is reduced in $\operatorname{gr}\mathbb{J}$ to the super-commutativity $x_{\lambda}x_{\mu} = -x_{\mu}x_{\lambda}$.
To be more precise, $\mathsf{V}_{\mathbb{J}}$ is naturally isomorphic to $\operatorname{P}(\operatorname{gr}\mathbb{J})_{1}$, and the isomorphism together with the inclusion $J = (\operatorname{gr}\mathbb{J})(0) \hookrightarrow \operatorname{gr}\mathbb{J}$ uniquely extend to a canonical isomorphism
\begin{equation}\label{E29}
J \ltimes \wedge(\mathsf{V}_{\mathbb{J}}) \overset{\simeq}{\longrightarrow} \operatorname{gr}\mathbb{J}
\end{equation}
of graded Hopf superalgebras, which in fact coincides with $\operatorname{gr}(\phi_{X})$.

\subsection{Quick review of structure of Hopf superalgebras}\label{Subsec2.5}
Let $\mathbb{J}$ be a super-coalgebra. 
Every simple subcoalgebra of the $\mathbb{J}$, which is regarded as an ordinary coalgebra, is purely even.
The \emph{coradical} $\operatorname{Corad}\mathbb{J}$ of $\mathbb{J}$ is by definition the (necessarily, direct) sum of all simple subcoalgebras; it is seen to be included in the $J$ in \eqref{E25}.

Suppose that $\mathbb{J}$ is a Hopf superalgebra. 
We say that $\mathbb{J}$ is \emph{irreducible} if $\Bbbk 1$ is a unique simple subcoalgebra, or namely, $\Bbbk 1 = \operatorname{Corad}\mathbb{J}$.
It is known that $\mathbb{J}$ includes the largest irreducible Hopf sub-superalgebra denoted by $\mathbb{J}^{1}$.
If $\operatorname{char} \Bbbk = 0$, $\mathbb{J}^{1}$ coincides with the universal envelope $U(\operatorname{P}(\mathbb{J}))$ of the Lie superalgebra $\operatorname{P}(\mathbb{J})$ in \eqref{E26}.
We say that $\mathbb{J}$ is \emph{pointed} if the simple subcoalgebras of $\mathbb{J}$ are all 1-dimensional; this is the case if $\Bbbk$ is algebraically closed.
Suppose that $\mathbb{J}$ is pointed.
Then the group
\[ G(\mathbb{J}) = \{\, g \in \mathbb{J} \mid \Delta(g)=g \otimes g,\ \varepsilon(g)=1 \, \} \]
of the grouplike elements, which all are necessarily even for $\mathbb{J}$ being over a field, 
spans $\operatorname{Corad}\mathbb{J}$.
Moreover, $\mathbb{J}^{1}$ is stable under the conjugation by $G(\mathbb{J})$
\[ a \triangleleft g = g^{-1} a g,\quad a \in \mathbb{J}^{1},\ \, g \in G(\mathbb{J}), \]
and the resulting 
smash product $\Bbbk G(\mathbb{J}) \ltimes \mathbb{J}^{1}$
is naturally identified with $\mathbb{J}$.

\section{Co-free super-coalgebras}\label{Sec3}

Let $\mathbb{C}$ be a super-coalgebra; we remark this symbol shall not express the complex field, which does not appear in this paper.
Let $V$ be a super-vector space.
Due to the super-cocommutativity of $\mathbb{C}$, every left $\mathbb{C}$-super-comodule structure 
(or left $\mathbb{C}$-coaction) $\lambda: V \to \mathbb{C} \otimes V$  on $V$ can be identified with a right $\mathbb{C}$-super-comodule structure (or right $\mathbb{C}$-coaction) $\rho : V \to V \otimes \mathbb{C}$ through
\begin{equation}\label{E31}
\lambda = c_{V, \mathbb{C}} \circ \rho\ \, \text{ or } \ \, c_{\mathbb{C}, V} \circ \lambda = \rho,
\end{equation}
and vice versa.
Notice that $(V, \lambda, \rho)$ is then a $(\mathbb{C}, \mathbb{C})$-bi-super-comodule in the sense
\[ (\mathrm{id}_{\mathbb{C}} \otimes \rho) \circ \lambda =(\lambda \otimes \mathrm{id}_{\mathbb{C}}) \circ \rho. \]
Therefore, given a $\mathbb{C}$-super-comodule, we may regard it as any of left, right and bi-super-comodules, and do not specify which unless it is needed.

Given $\mathbb{C}$-super-comodules $V$ and $W$, the \emph{cotensor product} $V\, \square_{\mathbb{C}} W$ is defined by the equalizer diagram
\[
\xymatrix@C=15pt{
V\, \square_{\mathbb{C}} W \ar[r] & V \otimes W \ar@<0.5ex>[rr]^-{\rho_{V} \otimes \mathrm{id}_{W}} \ar@<-0.5ex>[rr]_-{\mathrm{id}_{V} \otimes \lambda_{W}} & & V \otimes \mathbb{C} \otimes W,
}
\]
where $\rho_{V}$ (resp., $\lambda_{V}$) denotes the right (resp., left) $\mathbb{C}$-coaction on $V$ (resp., $W$).
The left $\mathbb{C}$-coaction on $V$ and the right $\mathbb{C}$-coaction on $W$ give rise to the same (in the super sense as given by \eqref{E31}) coaction onto $V\, \square_{\mathbb{C}} W$, with which we regard 
$V\, \square_{\mathbb{C}} W$ as a $\mathbb{C}$-super-comodule.

We let $\mathsf{SMod}^{\mathbb{C}}$ denote the category of $\mathbb{C}$-super-comodules; we thus take ``right'' for this notation.
It is easy to see the following.
\begin{lemma}\label{L31}
$\mathsf{SMod}^{\mathbb{C}}$ forms a monoidal category,
\[ (\mathsf{SMod}^{\mathbb{C}},\ \square_{\mathbb{C}},\ \mathbb{C}) \]
whose tensor product is given by the cotensor product $\square_{\mathbb{C}}$, and whose unit object is $\mathbb{C}$.
This is in fact symmetric with respect to the supersymmetry
\[ {c_{V, W}|}_{V\, \square_{\mathbb{C}} W}: V\, \square_{\mathbb{C}} W \overset{\simeq}{\longrightarrow} W\, \square_{\mathbb{C}} V \]
restricted to the cotensor products.
\end{lemma}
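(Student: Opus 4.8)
The plan is to verify the monoidal axioms for $(\mathsf{SMod}^{\mathbb{C}}, \square_{\mathbb{C}}, \mathbb{C})$ essentially by dualizing the familiar fact that bicomodules over a cocommutative coalgebra form a symmetric monoidal category, but taking advantage of the super-cocommutativity so that one-sided comodules suffice. First I would check that $V\,\square_{\mathbb{C}}W$ is well-defined as a $\mathbb{C}$-super-comodule: since $\mathbb{C}$ is super-cocommutative, the left coaction on $V$ and the right coaction on $W$ induce, via \eqref{E31}, matching maps $V\otimes W \to \mathbb{C}\otimes V\otimes W$ and $V\otimes W\to V\otimes W\otimes\mathbb{C}$ whose restrictions to the equalizer $V\,\square_{\mathbb{C}}W$ agree (in the super sense) and are coassociative and counital; this uses the bi-super-comodule identity already recorded in the text. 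The counit object is $\mathbb{C}$ itself, with $\Delta_{\mathbb{C}}$ as its coaction, and the equalizer defining $V\,\square_{\mathbb{C}}\mathbb{C}$ is solved by $\rho_V$ with inverse $\mathrm{id}_V\otimes\varepsilon_{\mathbb{C}}$, giving the right unit isomorphism; the left unit isomorphism $\mathbb{C}\,\square_{\mathbb{C}}V\simeq V$ is analogous using $\lambda_V$.

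Next I would establish the associativity constraint. For $\mathbb{C}$-super-comodules $U$, $V$, $W$, both $(U\,\square_{\mathbb{C}}V)\,\square_{\mathbb{C}}W$ and $U\,\square_{\mathbb{C}}(V\,\square_{\mathbb{C}}W)$ are subspaces of $U\otimes V\otimes W$, and one checks directly that they coincide: an element of $U\otimes V\otimes W$ lies in the triple cotensor product precisely when it is annihilated by the difference of $\rho_U\otimes\mathrm{id}$ and $\mathrm{id}\otimes\lambda_V\otimes\mathrm{id}$ in the first slot-pair and by the difference of $\mathrm{id}\otimes\rho_V\otimes\mathrm{id}$ and $\mathrm{id}\otimes\mathrm{id}\otimes\lambda_W$ in the second, and these two conditions are visibly symmetric in how they bracket. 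Thus the associator can be taken to be the identity inclusion, and the pentagon and triangle axioms hold trivially. The only subtlety here is that cotensor products need not commute with arbitrary subspace intersections, so I would phrase this as: the equalizer of a composite is computed by intersecting the two partial equalizers inside the ambient tensor product, which is valid because all the relevant maps are $\Bbbk$-linear and we work with honest kernels.

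Then I would treat the symmetry. The ordinary supersymmetry $c_{V,W}:V\otimes W\overset{\simeq}{\to}W\otimes V$ of \eqref{E21} restricts to a map $V\,\square_{\mathbb{C}}W\to W\,\square_{\mathbb{C}}V$: if $x\in V\otimes W$ satisfies $(\rho_V\otimes\mathrm{id}_W)(x)=(\mathrm{id}_V\otimes\lambda_W)(x)$, then applying $c$ in the appropriate tensor factors and using \eqref{E31} to convert $\rho_V$ to $\lambda_V$ and $\lambda_W$ to $\rho_W$ shows that $c_{V,W}(x)$ satisfies $(\rho_W\otimes\mathrm{id}_V)(c_{V,W}(x))=(\mathrm{id}_W\otimes\lambda_V)(c_{V,W}(x))$ — this is exactly where super-cocommutativity of $\mathbb{C}$ is used, to make the sign bookkeeping close up. That the restricted map is $\mathbb{C}$-colinear follows from the compatibility of $c$ with the induced coactions, and the hexagon axioms reduce to the hexagon axioms for $c$ in $\mathsf{SMod}_{\Bbbk}$, which hold because $\mathsf{SMod}_{\Bbbk}$ is symmetric monoidal; finally $c_{W,V}\circ c_{V,W}=\mathrm{id}$ is inherited from the same identity in $\mathsf{SMod}_{\Bbbk}$.

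I expect the main obstacle to be purely notational rather than conceptual: keeping straight, throughout, the identification \eqref{E31} between the left and right coactions and the attendant Koszul signs, so that every verification — well-definedness of the comodule structure on $\square_{\mathbb{C}}$, the associativity identification as subspaces, and especially the symmetry — genuinely uses super-cocommutativity in the right place and no spurious sign obstructs. Since the text explicitly grants ``It is easy to see the following,'' I would keep the write-up brief, emphasizing these three points and citing the bi-super-comodule identity already displayed, rather than expanding the diagram chases in full.
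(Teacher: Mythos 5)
Your proposal is correct and is exactly the routine verification the paper leaves to the reader (the lemma is prefaced only by ``It is easy to see the following,'' with no proof supplied). You identify the right points of care --- the unit isomorphisms via $\rho_V$ and $\mathrm{id}_V\otimes\varepsilon_{\mathbb{C}}$, the identification of both iterated cotensor products as the same subspace of $U\otimes V\otimes W$ (justified, as you note, because over a field $-\otimes W$ is exact so equalizers are preserved), and the use of super-cocommutativity via \eqref{E31} to make $c_{V,W}$ restrict to the cotensor products --- so nothing further is needed.
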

\begin{definition}\label{D31}
A \emph{super-coalgebra over $\mathbb{C}$} or \emph{$\mathbb{C}$-super-coalgebra} is a pair $(\mathbb{D}, \omega)$ of a super-coalgebra $\mathbb{D}$ and a super-coalgebra map $\omega: \mathbb{D} \to \mathbb{C}$.
\end{definition}

Giving a $\mathbb{C}$-super-coalgebra is the same as giving a coalgebra in $(\mathsf{SMod}^{\mathbb{C}},\ \square_{\mathbb{C}},\ \mathbb{C})$.
To be more precise we have the following.
\begin{lemma}\label{L32}
If $(\mathbb{D}, \omega)$ is a $\mathbb{C}$-super-coalgebra, then $\mathbb{D}$, regarded as a $\mathbb{C}$-super-comodule with respect to $(\mathrm{id}_{\mathbb{D}} \otimes \omega) \circ \Delta_{\mathbb{D}}: \mathbb{D} \to \mathbb{D} \otimes \mathbb{C}$, is a coalgebra in $(\mathsf{SMod}^{\mathbb{C}},\ \square_{\mathbb{C}},\ \mathbb{C})$ 
whose coproduct is $\Delta_{\mathbb{D}}: \mathbb{D} \to \mathbb{D} \otimes \mathbb{D}$, regarded to be mapping into $\mathbb{D}\, \square_{\mathbb{C}} \mathbb{D}$, and whose counit  is $\omega: \mathbb{D} \to \mathbb{C}$.
Every coalgebra in the category arises uniquely in this way.
\end{lemma}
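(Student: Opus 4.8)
The plan is to verify directly that the structure maps of a $\mathbb{C}$-super-coalgebra $(\mathbb{D},\omega)$ are morphisms in $(\mathsf{SMod}^{\mathbb{C}},\square_{\mathbb{C}},\mathbb{C})$, and conversely to reconstruct a $\mathbb{C}$-super-coalgebra from an abstract coalgebra in that category. First I would fix the $\mathbb{C}$-comodule structure on $\mathbb{D}$ to be $\rho_{\mathbb{D}}=(\mathrm{id}_{\mathbb{D}}\otimes\omega)\circ\Delta_{\mathbb{D}}$; coassociativity and counitality of $\Delta_{\mathbb{D}}$, together with the fact that $\omega$ is a super-coalgebra map, show this is a genuine coaction. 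The corresponding left coaction is $\lambda_{\mathbb{D}}=(\omega\otimes\mathrm{id}_{\mathbb{D}})\circ\Delta_{\mathbb{D}}$ by super-cocommutativity and \eqref{E31}. The key point is then that $\Delta_{\mathbb{D}}\colon\mathbb{D}\to\mathbb{D}\otimes\mathbb{D}$ lands inside the equalizer $\mathbb{D}\,\square_{\mathbb{C}}\mathbb{D}$: one must check $(\rho_{\mathbb{D}}\otimes\mathrm{id}_{\mathbb{D}})\circ\Delta_{\mathbb{D}}=(\mathrm{id}_{\mathbb{D}}\otimes\lambda_{\mathbb{D}})\circ\Delta_{\mathbb{D}}$, and this is exactly coassociativity of $\Delta_{\mathbb{D}}$ after applying $\omega$ in the middle tensor factor, i.e.\ $(\mathrm{id}\otimes\omega\otimes\mathrm{id})\circ(\Delta_{\mathbb{D}}\otimes\mathrm{id})\circ\Delta_{\mathbb{D}}=(\mathrm{id}\otimes\omega\otimes\mathrm{id})\circ(\mathrm{id}\otimes\Delta_{\mathbb{D}})\circ\Delta_{\mathbb{D}}$.

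Next I would check that $\Delta_{\mathbb{D}}$ and $\omega$ are $\mathbb{C}$-comodule maps. For $\Delta_{\mathbb{D}}$ this means compatibility of $\rho_{\mathbb{D}}$ with the coaction on the cotensor product $\mathbb{D}\,\square_{\mathbb{C}}\mathbb{D}$ described just before Definition~\ref{D31}; unravelling that coaction it again reduces to coassociativity and the multiplicativity of $\Delta_{\mathbb{D}}$ under $\omega$. For $\omega$, being a comodule map $\mathbb{D}\to\mathbb{C}$ (with $\mathbb{C}$ carrying its regular coaction $\Delta_{\mathbb{C}}$) says $\Delta_{\mathbb{C}}\circ\omega=(\omega\otimes\omega)\circ\Delta_{\mathbb{D}}$, which is precisely the hypothesis that $\omega$ is a super-coalgebra map. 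Coassociativity of $\Delta_{\mathbb{D}}$ as a morphism into $\mathbb{D}\,\square_{\mathbb{C}}\mathbb{D}\,\square_{\mathbb{C}}\mathbb{D}$ and the counit axioms with respect to $\omega$ then follow from the corresponding axioms for $\mathbb{D}$ over $\Bbbk$, since the forgetful functor $\mathsf{SMod}^{\mathbb{C}}\to\mathsf{SMod}_{\Bbbk}$ is faithful and the unit object $\mathbb{C}$ of $\mathsf{SMod}^{\mathbb{C}}$ satisfies $V\,\square_{\mathbb{C}}\mathbb{C}\cong V$ canonically.

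For the converse, suppose $(\mathbb{D},\delta,\epsilon)$ is a coalgebra in $(\mathsf{SMod}^{\mathbb{C}},\square_{\mathbb{C}},\mathbb{C})$, so $\delta\colon\mathbb{D}\to\mathbb{D}\,\square_{\mathbb{C}}\mathbb{D}$ and $\epsilon\colon\mathbb{D}\to\mathbb{C}$. Composing $\delta$ with the inclusion $\mathbb{D}\,\square_{\mathbb{C}}\mathbb{D}\hookrightarrow\mathbb{D}\otimes\mathbb{D}$ gives a map $\Delta_{\mathbb{D}}$, and composing $\epsilon$ with $\varepsilon_{\mathbb{C}}$ gives $\varepsilon_{\mathbb{D}}\colon\mathbb{D}\to\Bbbk$; coassociativity and counitality in $\mathsf{SMod}^{\mathbb{C}}$ pass to the ordinary ones via faithfulness of the forgetful functor and the identification $V\,\square_{\mathbb{C}}\mathbb{C}\cong V$, so $(\mathbb{D},\Delta_{\mathbb{D}},\varepsilon_{\mathbb{D}})$ is a super-coalgebra and $\omega:=\epsilon$ is a super-coalgebra map by the comodule-map property of $\epsilon$. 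Finally I would check that the comodule structure $\rho_{\mathbb{D}}$ underlying the given coalgebra necessarily equals $(\mathrm{id}_{\mathbb{D}}\otimes\omega)\circ\Delta_{\mathbb{D}}$: this is forced by $\epsilon$ being the counit, namely $(\mathrm{id}_{\mathbb{D}}\,\square_{\mathbb{C}}\,\epsilon)\circ\delta=\mathrm{id}_{\mathbb{D}}$ composed appropriately, which pins down $\rho_{\mathbb{D}}$ uniquely and yields the asserted uniqueness. I expect the main obstacle to be purely bookkeeping: carefully tracking the supersymmetry signs in identifying left/right coactions on the cotensor product and in the equalizer condition, rather than any conceptual difficulty—the statement is essentially the monoidal-category tautology that "coalgebra objects in comodules over $\mathbb{C}$" unwinds to "super-coalgebras equipped with a map to $\mathbb{C}$."
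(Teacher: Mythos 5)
Your proposal is correct and follows essentially the same route as the paper, which simply records the inverse construction (compose the categorical coproduct with the inclusion $\mathbb{D}\,\square_{\mathbb{C}}\mathbb{D}\hookrightarrow\mathbb{D}\otimes\mathbb{D}$ and the categorical counit with $\varepsilon_{\mathbb{C}}$) and asserts that the two procedures are mutually inverse. You have merely written out the routine verifications the paper leaves implicit — that $\Delta_{\mathbb{D}}$ equalizes into the cotensor product via coassociativity, that the structure maps are $\mathbb{C}$-colinear, and that the counit axiom forces $\rho_{\mathbb{D}}=(\mathrm{id}_{\mathbb{D}}\otimes\omega)\circ\Delta_{\mathbb{D}}$, which gives the uniqueness.
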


\begin{proof}
Given a coalgebra $\mathbb{D}$ in the category, its coproduct $\mathbb{D} \to \mathbb{D}\, \square_{\mathbb{C}} \mathbb{D} \hookrightarrow \mathbb{D} \otimes \mathbb{D}$ composed with the embedding into $\mathbb{D} \otimes \mathbb{D}$, and its counit $\mathbb{D} \to \mathbb{C} \xrightarrow{\varepsilon_{\mathbb{C}}} \Bbbk$ composed with the counit of $\mathbb{C}$ make $\mathbb{D}$ into a super-coalgebra.
This super-coalgebra, paired with the counit $\mathbb{D} \to \mathbb{C}$ in the category, is made into a $\mathbb{C}$-super-coalgebra.
This construction is seen to be an inverse procedure of the construction given in the lemma.
\end{proof}

Let $C$ be a (cocommutative) coalgebra, and let $\mathsf{V}$ be a purely odd $C$-super-comodule.
We emphasize that $C$ is supposed to be purely even, and $\mathsf{V}$ purely odd. 

\begin{definition}\label{D32}
A \emph{$C$-super-coalgebra on $\mathsf{V}$} is a  pair $(\mathbb{D}, \pi)$ of a $C$-super-coalgebra $\mathbb{D}$ and a $C$-super-colinear map $\pi: \mathbb{D} \to \mathsf{V}$. The $C$-super-coalgebras on $\mathsf{V}$
form a category, whose
morphisms are $C$-colinear super-coalgebra maps compatible with the maps to $\mathsf{V}$.
A $C$-super-coalgebra $(\mathbb{D}, \pi)$ on $\mathsf{V}$ is said to be \emph{co-free} if it is a terminal object of the category, or more explicitly, if given a $C$-super-coalgebra $(\mathbb{E}, \varpi)$ on $\mathsf{V}$, there exists a unique 
morphism $(\mathbb{E}, \varpi) \to (\mathbb{D}, \pi)$ of $C$-super-coalgebras on $\mathsf{V}$.
\end{definition}

\begin{prop}\label{P31}
There exists uniquely (up to isomorphism) a co-free $C$-super-coalgebra on $\mathsf{V}$.
\end{prop}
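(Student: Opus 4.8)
The plan is to construct the co-free $C$-super-coalgebra on $\mathsf V$ explicitly as a cotensor product of $C$ with a ``co-free coalgebra on $\mathsf V$ relative to $C$'', and then verify the universal property. The natural candidate is
\[
\mathbb D \;=\; C \,\square_{\text{(triv)}}\, \Bigl(\bigoplus_{n\ge 0} A^{n}(\mathsf V)\Bigr),
\]
or more precisely the following: regard $\wedge(\mathsf V)=A(\mathsf V)$ with its graded-coalgebra structure as recalled in Subsection~\ref{Subsec2.3}, equip each $\mathsf V^{\otimes n}$ with the $n$-fold ``diagonal'' $C$-coaction coming from the $C$-coaction on $\mathsf V$ (using super-cocommutativity of $C$ to make this well defined on the $\mathfrak S_n$-invariants $A^{n}(\mathsf V)$), and set $\mathbb D := C \,\square_C\, A(\mathsf V)$ where $A(\mathsf V)$ is viewed as a $C$-super-comodule in this way. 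By Lemma~\ref{L32} a $C$-super-coalgebra is the same as a coalgebra in $(\mathsf{SMod}^{C},\square_C,C)$, and $\mathbb D$ is built as the ``free $C$-comodule'' functor applied to the cofree coalgebra object $A(\mathsf V)$ inside the symmetric monoidal category; the projection $\pi\colon\mathbb D\to\mathsf V$ is $C\square_C A(\mathsf V)\to A^1(\mathsf V)=\mathsf V$ induced by the graded projection. Uniqueness up to isomorphism is automatic once existence of a terminal object is shown, so only the universal property needs proof.

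First I would spell out the comodule and coalgebra structures precisely and check that $(\mathbb D,\pi)$ is a genuine object of the category of Definition~\ref{D32}: that $\Delta_{\mathbb D}$ lands in $\mathbb D\square_C\mathbb D$, that $\omega\colon\mathbb D\to C$ is a super-coalgebra map, and that $\pi$ is $C$-colinear. Next, given any $C$-super-coalgebra $(\mathbb E,\varpi)$ on $\mathsf V$ with structure map $\pi_{\mathbb E}\colon\mathbb E\to\mathsf V$, I would build the map $f\colon\mathbb E\to\mathbb D$. The idea is the standard cofree-coalgebra construction: iterate the coproduct. For each $n$, form the composite
\[
\mathbb E \xrightarrow{\ \Delta^{n-1}\ } \mathbb E^{\otimes n} \xrightarrow{\ \pi_{\mathbb E}^{\otimes n}\ } \mathsf V^{\otimes n},
\]
observe it factors through the $\mathfrak S_n$-invariants (because $\Delta$ is super-cocommutative and $\pi_{\mathbb E}$ is a map of super-vector spaces, the composite is symmetric under the $\mathfrak S_n$-action defined in Subsection~\ref{Subsec2.3}, with signs), hence gives $f_n\colon\mathbb E\to A^{n}(\mathsf V)$; assemble $f=(\text{id}_{C}\square\, f_\bullet)$ together with $\varpi$ into a map into $\mathbb D=C\square_C A(\mathsf V)$. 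Then verify $f$ is $C$-colinear (immediate from the construction using $\varpi$), a super-coalgebra map (this is where one uses the coassociativity identities among the $\Delta^{n-1}$ together with the shuffle formula for $\Delta$ on $A(\mathsf V)$ displayed in Subsection~\ref{Subsec2.3}), and compatible with the maps to $\mathsf V$ (i.e.\ $\pi\circ f=\pi_{\mathbb E}$, which is the $n=1$ component). Finally, uniqueness of $f$: any morphism $g\colon\mathbb E\to\mathbb D$ in the category must satisfy $\omega\circ g=\varpi$ and $\pi\circ g=\pi_{\mathbb E}$, and since $g$ is a coalgebra map its $A^{n}$-component is forced to equal $\pi^{\otimes n}\circ\Delta_{\mathbb D}^{n-1}\circ g=(\pi\circ g)^{\otimes n}\circ\Delta_{\mathbb E}^{n-1}\cdot(\text{symmetrization})=f_n$ after identifying via $\varpi$; thus $g=f$.

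The main obstacle I anticipate is the bookkeeping of signs and of the $C$-coactions simultaneously. Concretely: (i) checking that the diagonal $C$-coaction on $\mathsf V^{\otimes n}$ descends to $A^{n}(\mathsf V)$ and that $A(\mathsf V)$ is thereby a coalgebra object in $(\mathsf{SMod}^{C},\square_C,C)$ requires using super-cocommutativity of $C$ in the same way it is used for the symmetry of $\square_C$ in Lemma~\ref{L31}; (ii) verifying that the iterated composite $\pi_{\mathbb E}^{\otimes n}\circ\Delta^{n-1}$ is $\mathfrak S_n$-invariant with the correct signs — this is the super-analogue of the classical fact that the reduced coproduct iterates land in symmetric tensors, and the signs must match the convention in \eqref{E23}; (iii) confirming that $f$ respects $\square_C$ rather than just $\otimes$, i.e.\ that the cotensor condition is automatically satisfied because each $f_n$ is $C$-colinear for the diagonal coaction and $\varpi$ controls the $C$-part coherently. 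None of these is conceptually hard, but they are the places where a careless sign or a mismatched left/right coaction convention would break the argument, so I would set up the conventions once, carefully, at the start. An alternative, essentially equivalent route is to invoke the known existence of cofree cocommutative coalgebras over a field together with the right adjoint ``cofree $C$-comodule'' functor $(-)\,\square_C\,C$, but the explicit model above is more self-contained and feeds directly into Example~\ref{Ex32} and Proposition~\ref{P42}.
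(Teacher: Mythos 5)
Your overall strategy (iterate the coproduct, symmetrize, project to degree one) is the same as the paper's, but your candidate object is not well defined, and this is a genuine gap rather than bookkeeping. You propose to equip $\mathsf{V}^{\otimes n}$ with an ``$n$-fold diagonal $C$-coaction'' and take $A^{n}(\mathsf{V})=(\mathsf{V}^{\otimes n})^{\mathfrak{S}_{n}}$ as the degree-$n$ piece. But $C$ is only a coalgebra: the coaction $\rho\otimes\rho$ lands in $\mathsf{V}\otimes C\otimes\mathsf{V}\otimes C$, and collapsing the two $C$-factors to one would require a multiplication on $C$, i.e.\ a bialgebra structure that is not available. Super-cocommutativity of $C$ makes $(\mathsf{SMod}^{C},\square_{C},C)$ symmetric, but it does not make $\otimes_{\Bbbk}$ of comodules into a comodule. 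The correct degree-$n$ component is the space of $\mathfrak{S}_{n}$-invariants of the \emph{cotensor} power, $(\mathsf{V}^{\square_{C} n})^{\mathfrak{S}_{n}}=\mathsf{V}^{\square_{C} n}\cap A^{n}(\mathsf{V})$, which is in general a proper subspace of $A^{n}(\mathsf{V})$; this is also forced by your own universal map, since $\varpi^{\square n}\circ(\Delta_{\mathbb{E}})_{n-1}$ naturally takes values in $\mathsf{V}^{\square_{C} n}$, not in all of $\mathsf{V}^{\otimes n}$, and likewise the components $\Delta_{i,n-i}$ of the coproduct must land in the cotensor product $\mathbb{D}(i)\,\square_{C}\,\mathbb{D}(n-i)$. (Also, $C\,\square_{C}(-)$ is canonically the identity functor, so writing $\mathbb{D}=C\,\square_{C}A(\mathsf{V})$ adds nothing; the cofree $C$-comodule functor is $(-)\otimes C$ on vector spaces, which is a different operation.)

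The second gap is that you never explain why $f=\sum_{n}f_{n}$ is a well-defined map into the \emph{direct sum} $\bigoplus_{n}\mathbb{D}(n)$: a priori $f_{n}(a)$ could be nonzero for infinitely many $n$, and then your construction only produces a map into the direct product, which is not the object you built. This is exactly where the hypothesis that $\mathsf{V}$ is purely odd (and $C$ purely even) enters: the coradical of $\mathbb{E}$ is purely even, hence killed by $\varpi$, and the coradical filtration then forces $f_{n}(a)=0$ for $n$ large. Without this observation the terminal object would have to be the much more complicated general cofree coalgebra, and the clean graded model $\bigoplus_{n}(\mathsf{V}^{\square_{C}n})^{\mathfrak{S}_{n}}$ would simply fail to be terminal. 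Your uniqueness argument (forcing the degree-$n$ component of any morphism to be the symmetrized iterated coproduct) is essentially the paper's and is fine once the two issues above are repaired.
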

\begin{proof}
The uniqueness is obvious.
Let us construct explicitly a desired $(\mathbb{D}, \pi)$, which is in fact graded, $\mathbb{D}= \bigoplus_{n \geq 0} \mathbb{D}(n)$, as a coalgebra. We write simply $\square$ for $\square_{C}$.
Let
\[ \mathbb{D}(0):=C,\quad \mathbb{D}(1):=V. \]
Let $n >1$.
Notice that the cotensor product
\[ V^{\square n} = V~\square \cdots \square~V \]
of $n$ copies of $V$ is stable under the $\mathfrak{S}_{n}$-action.
Define
\[ \mathbb{D}(n):= (V^{\square n})^{\mathfrak{S}_{n}}\, (= V^{\square n} \cap A^{n}(V)) \]
to be the $C$-sub-comodule consisting of all $\mathfrak{S}_{n}$-invariants in $V^{\square n}$, which equals $V^{\square n} \cap A^{n}(V)$ in $V^{\otimes n}$.
We let $V^{\square 0} = C\, (=\mathbb{D}(0))$,\ $V^{\square 1}=V\, (= \mathbb{D}(1))$.

Let $n \geq 0$ and $0 \leq i \leq n$.
We see that the canonical $C$-super-colinear isomorphisms $V^{\square n} \overset{\simeq}{\longrightarrow} 
V^{\square i}~\square~V^{\square (n-i)}$ restrict to
\[ \mathbb{D}(n) \to \mathbb{D}(i)~\square~\mathbb{D}(n-i), \]
which we denote by $\Delta_{i, n-i}$; cf.~\eqref{E23a} in the case where $0 < i < n$.
Define a $C$-super-colinear map $\Delta: \mathbb{D} \to \mathbb{D}~\square~\mathbb{D}$ by
\[ \Delta(a) = \sum_{0 \leq i \leq n} \Delta_{i, n-i}(a),\quad a \in \mathbb{D}(n),\ n \geq 0. \]
Let $\varepsilon: \mathbb{D} \to \mathbb{D}(0)=C$ be the projection.
Then we see that $(\mathbb{D}, \Delta, \varepsilon)$ is a (graded) $C$-super-coalgebra; cf. the graded-coalgebra structure of $A(\mathsf{V})$ given in Section \ref{Subsec2.3}.

We wish to prove that this $\mathbb{D}$, paired with the projection $\pi: \mathbb{D} \to \mathbb{D}(1) = \mathsf{V}$, 
is co-free.
Given a $C$-super-coalgebra $(\mathbb{E}, \varpi)$ on $\mathsf{V}$, let us construct $C$-super-colinear maps $f_{n}: \mathbb{E} \to \mathbb{D}(n)$,\ $n \geq 0$.
Let
\[ f_{0}: \mathbb{E} \to \mathbb{D}(0) = C,\quad f_{1}: \mathbb{E} \to \mathbb{D}(1) = \mathsf{V} \]
be the super-coalgebra map equipped to $\mathbb{E}$, and be $\varpi$, respectively.
For $n>1$, define $f_{n}: \mathbb{E} \to \mathbb{D}(n)$ to be the composite
\begin{equation}\label{E32}
\mathbb{E} \xrightarrow{(\Delta_{\mathbb{E}})_{n-1}} \mathbb{E}^{\square n} \xrightarrow{\varpi^{\square n}} \mathsf{V}^{\square n},
\end{equation}
where $(\Delta_{\mathbb{E}})_{n-1}$ denote the $(n-1)$-iterated coproduct $\Delta_{\mathbb{E}}$ of $\mathbb{E}$.
This composite indeed maps into $\mathbb{D}(n)\, (=(\mathsf{V}^{\square n})^{\mathfrak{S}_{n}})$, since $\mathbb{E}$ is super-cocommutative.
Since the coradical $\operatorname{Corad} \mathbb{E}$ of $\mathbb{E}$ is purely even, and is, therefore, killed by $\varpi$, it follows that for every $a \in \mathbb{E}$, we have $f_{n}(a)=0$ for sufficiently large $n$.
Thus we can define $f: \mathbb{E} \to \mathbb{D}$ by
\[ f(a)=\sum_{n \geq 0} f_{n}(a),\quad a \in \mathbb{E}. \]

One sees that this $f$ is a unique morphism $\mathbb{E} \to \mathbb{D}$ of $C$-super-coalgebras on $\mathsf{V}$. 
Indeed, it is easy to see that $f$ is compatible with the counits.
Let $n>1$. 
To see that $f$ is compatible with the coproducts, one should use the formula
\[
\Delta_{i,n-i}\circ f_n=(f_i\, \square f_{n-i})\circ \Delta_{\mathbb{E}}, 
\]
where $0\leq i\leq n$. For the uniqueness one should use the fact that the composite
\[
\mathbb{D}\xrightarrow{\Delta_{n-1}} \mathbb{D}^{\square n}\xrightarrow{\pi^{\square n}} \mathsf{V}^{\square n}
=\mathbb{D}(n)
\]
coincides with the projection $\mathbb{D}\to \mathbb{D}(n)$. 
\end{proof}

We denote the thus constructed, co-free $C$-super-coalgebra on $\mathsf{V}$ by
\[ \operatorname{coF}^{C}(\mathsf{V}). \]
We emphasize that this is graded so that 
\begin{equation}\label{E33}
\operatorname{coF}^{C}(\mathsf{V})(0)=C,\quad \operatorname{coF}^{C}(\mathsf{V})(1)=\mathsf{V} 
\end{equation}
and the associated map to $\mathsf{V}$ is the projection onto the first component.
In addition, the largest purely even sub-super-coalgebra of $\operatorname{coF}^{C}(\mathsf{V})$
is $C$, as is seen from \eqref{E25} and the construction above. 

\begin{example}\label{Ex31}
Let us be in the special case where $C$ is the trivial coalgebra $\Bbbk$ spanned by a grouplike element, and $\mathsf{V}$ is, therefore, a purely odd super-vector space.
Then we have
\begin{equation}\label{E33a}
\operatorname{coF}^{\Bbbk}(\mathsf{V}) = \wedge(\mathsf{V})\, (=A(\mathsf{V}) \text{; see} \eqref{E24}). 
\end{equation}
\end{example}
\begin{rem}\label{R31}
This fact \eqref{E33a} is essentially shown in the second half of the proof of \cite[Propsition 3.11]{HMT} by the authors joint with Hoshi, which, however, contains an error; the composite given on Page 41, line --9, should read
\[ C \xrightarrow{\Delta_{n-1}} C^{\otimes n} \xrightarrow{\pi^{\otimes n}} \mathsf{U}_{1}^{\otimes n}, \]
which indeed maps into $A^{n}(\mathsf{U}_{1}) (= \wedge^{n}(\mathsf{U}_{1}))$; cf. \eqref{E32}.
\end{rem}
\begin{example}\label{Ex32}
Suppose that $C$ is an arbitrary (cocommutative) coalgebra, and $\mathsf{V}$ is co-free, or namely, $\mathsf{V} = C \otimes \mathsf{W}$, where $\mathsf{W}$ is a purely odd super-vector space.
We see from the construction in the last proof that
\begin{equation}\label{E33b}
\operatorname{coF}^{C}(C \otimes \mathsf{W}) = C \otimes \wedge(\mathsf{W}). 
\end{equation}
\end{example}

Notice that \eqref{E33a} and \eqref{E33b} are identifications of graded coalgebras, as well. 
The relevant following observation is simple, but will be used in the next section.

\begin{rem}\label{R32}
In general, given a graded coalgebra 
$\mathbb{D}= \bigoplus_{n \geq 0} \mathbb{D}(n)$, the neutral component $\mathbb{D}(0)$
is a (cocommutative) coalgebra and the first component $\mathbb{D}(1)$ is a purely odd $\mathbb{D}(0)$-super-comodule with respect to the relevant component of the coproduct 
\begin{equation}\label{E34}
\Delta_{\mathbb{D}}(1)_{1,0} : \mathbb{D}(1)\to \mathbb{D}(1)\otimes \mathbb{D}(0). 
\end{equation}
Therefore, $\mathbb{D}$, paired with the projection onto the neutral component, is a $\mathbb{D}(0)$-super-coalgebra,
which in turn, paired with the projection onto the
first component, is a $\mathbb{D}(0)$-super-coalgebra on $\mathbb{D}(1)$. 

Recall from \eqref{E33} that $\operatorname{coF}^{C}(\mathsf{V})$ is such a graded coalgebra $\mathbb{D}$ with the 
property 
\begin{equation}\label{E35}
\mathbb{D}(0)=C,\quad \mathbb{D}(1)=\mathsf{V}.
\end{equation}
It may be understood, as a $C$-super-coalgebra on $\mathsf{V}$, to be the one
which arises, as above, from its graded-coalgebra structure with the property \eqref{E35}. 
Moreover, it is a terminal object in the category of 
those graded coalgebras with that property; morphisms in the category are supposed to be identical in degrees
$0$ and $1$. 
\end{rem}

The notion of smooth (cocommutative) coalgebras (see \cite[p.1521, lines 5–7]{T0}, 
\cite[Definition 1.4, Proposition 1.5]{FS}) 
is directly generalized in the super context as follows. 

An inclusion $\mathbb{D}\hookrightarrow \mathbb{E}$ of super-coalgebras is said to be \emph{essential} 
if $\operatorname{Corad}\mathbb{E}\subset \mathbb{D}$; see \cite[Sect.~1]{MO}. 
A super-coalgebra $\mathbb{D}$ is said to be \emph{smooth}
if every essential inclusion $\mathbb{D}\hookrightarrow \mathbb{E}$ of $\mathbb{D}$ into another
super-coalgebra $\mathbb{E}$ splits. The condition is equivalent to saying that given an essential
inclusion $\mathbb{E}' \hookrightarrow \mathbb{E}$ of super-coalgebras, every super-coalgebra map 
$\mathbb{E}' \to \mathbb{D}$ extends to some super-coalgebra map $\mathbb{E} \to \mathbb{D}$.

A coalgebra, or a purely even super-coalgebra, is smooth as a coalgebra (in the sense of \cite{T0}, \cite{FS}) if and only if it is smooth as a super-coalgebra
(in the sense just defined). To see `only if', one should notice that every super-coalgebra map from a super-coalgebra, say, $\mathbb{E}$ to a coalgebra
uniquely factors through the
quotient purely even super-coalgebra $\mathbb{E}/\mathbb{E}_1\, (=\mathbb{E}_0)$ of $\mathbb{E}$. 

If $\operatorname{char}\Bbbk =0$, then every Hopf algebra is smooth, 
as is well known, and this fact,
combined with the isomorphism given by $\phi_X$ in \eqref{E28}, proves that every Hopf superalgebra is then smooth; see (the proof of) Theorem \ref{T42} (2). 

\begin{prop}\label{P32}
If $C$ is a smooth coalgebra and $\mathsf{V}$ is an injective purely odd $C$-super-comodule, then 
$\operatorname{coF}^C(\mathsf{V})$ is smooth.  
\end{prop}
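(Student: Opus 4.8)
The plan is to verify the extension-lifting form of smoothness directly: given an essential inclusion $\mathbb{E}' \hookrightarrow \mathbb{E}$ of super-coalgebras and a super-coalgebra map $g \colon \mathbb{E}' \to \operatorname{coF}^{C}(\mathsf{V})$, I would produce an extension $\widetilde{g} \colon \mathbb{E} \to \operatorname{coF}^{C}(\mathsf{V})$. By the universal property in Definition \ref{D32} (and Remark \ref{R32}), a super-coalgebra map into $\operatorname{coF}^{C}(\mathsf{V})$ is the same datum as a $C$-super-coalgebra structure on the source together with a compatible map to $\mathsf{V}$; equivalently, it is determined by its degree-$0$ component (a coalgebra map to $C$) and its degree-$1$ component (a super-comodule-colinear map to $\mathsf{V}$ over that coalgebra map). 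So the task reduces to extending two pieces of data.

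The first step is to extend the degree-$0$ part. The composite $\mathbb{E}' \xrightarrow{g} \operatorname{coF}^{C}(\mathsf{V}) \xrightarrow{\varepsilon} C$ is a super-coalgebra map to the coalgebra $C$, hence (as noted after the definition of smoothness in the excerpt) factors through the purely even quotient $\mathbb{E}'/\mathbb{E}'_{1} = \mathbb{E}'_{0}$, giving an ordinary coalgebra map $\mathbb{E}'_{0} \to C$. Since $\mathbb{E}'_{0} \hookrightarrow \mathbb{E}_{0}$ is again an essential inclusion of coalgebras ($\operatorname{Corad}\mathbb{E} \subset \mathbb{E}_0$ is purely even and lies in $\mathbb{E}'$), smoothness of $C$ yields an extension $\omega \colon \mathbb{E}_{0} \to C$; precompose with $\mathbb{E} \to \mathbb{E}_{0}$ to get a coalgebra map $\omega \colon \mathbb{E} \to C$ extending the degree-$0$ part of $g$. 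This makes $\mathbb{E}$ a $C$-super-coalgebra, and $\mathbb{E}' \hookrightarrow \mathbb{E}$ a map of $C$-super-coalgebras.

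The second step is to extend the degree-$1$ part, $\varpi' \colon \mathbb{E}' \to \mathsf{V}$, to a $C$-super-colinear map $\varpi \colon \mathbb{E} \to \mathsf{V}$ over $\omega$. Here $\mathsf{V}$ is a $C$-super-comodule via $\omega$ in both cases, and the obstruction is precisely one of extending a morphism of $C$-super-comodules along the inclusion $\mathbb{E}' \hookrightarrow \mathbb{E}$ — i.e. an injectivity question for $\mathsf{V}$ in $\mathsf{SMod}^{C}$, which is exactly the hypothesis. One subtlety: injectivity of $\mathsf{V}$ as a $C$-comodule gives extensions along arbitrary comodule monomorphisms, so this step goes through once we know $\mathbb{E}' \hookrightarrow \mathbb{E}$ is a monomorphism of $C$-super-comodules, which it is. Then $(\mathbb{E}, \omega, \varpi)$ is a $C$-super-coalgebra on $\mathsf{V}$ restricting to $(\mathbb{E}', \omega|, \varpi')$, and the induced map $\widetilde{g} \colon \mathbb{E} \to \operatorname{coF}^{C}(\mathsf{V})$ from the universal property restricts to $g$ on $\mathbb{E}'$ — the latter because $g$ is itself the map induced by the $C$-super-coalgebra-on-$\mathsf{V}$ structure on $\mathbb{E}'$, and restriction commutes with the universal construction \eqref{E32}.

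The step I expect to be the genuine point requiring care is checking that the restriction of $\widetilde{g}$ to $\mathbb{E}'$ really equals $g$: one must confirm that the comparison map $f \colon \mathbb{E}' \to \operatorname{coF}^{C}(\mathsf{V})$ produced by Proposition \ref{P31} from the structure $(\omega|_{\mathbb{E}'}, \varpi')$ — which by construction coincides with $g$ — agrees in each degree $n$ with the restriction of $f \colon \mathbb{E} \to \operatorname{coF}^{C}(\mathsf{V})$, and this follows because the formula \eqref{E32} for $f_{n}$ is natural in the source super-coalgebra. The coalgebra-smoothness input for $C$ and the comodule-injectivity input for $\mathsf{V}$ are what make steps one and two work; everything else is bookkeeping against the universal property.
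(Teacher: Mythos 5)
Your proposal is correct and follows essentially the same route as the paper's proof: extend the degree-$0$ component using smoothness of $C$, extend the degree-$1$ component using injectivity of $\mathsf{V}$, and invoke the universal property of $\operatorname{coF}^{C}(\mathsf{V})$ to assemble and compare the resulting maps. The paper simply works with the equivalent splitting formulation (the case $\mathbb{E}'=\operatorname{coF}^{C}(\mathsf{V})$, $g=\mathrm{id}$, with the retraction identified by the uniqueness clause of co-freeness), so the difference is only cosmetic.
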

\begin{proof}
Write $\mathbb{D}$ for $\operatorname{coF}^C(\mathsf{V})$. 
We are going to prove that an arbitrarily given essential inclusion 
$\mathbb{D}\hookrightarrow \mathbb{E}$ splits under the assumptions. 
By the smoothness assumption for $C$, the projection $\mathbb{D}\to \mathbb{D}(0)=C$ extends to a 
super-coalgebra map $\mathbb{E}\to C$, with which we regard $\mathbb{E}$ as a $C$-super-coalgebra. 
By the injectivity assumption for $\mathsf{V}$, the projection $\mathbb{D}\to \mathbb{D}(1)=\mathsf{V}$ extends to a
$C$-super-colinear map $\mathbb{E}\to \mathsf{V}$, which gives rise to a unique 
$C$-super-coalgebra map $\mathbb{E}\to \mathbb{D}$ on $\mathsf{V}$ 
by the co-freeness of $\mathbb{D}$. The map must be a retraction of the inclusion 
$\mathbb{D}\hookrightarrow \mathbb{E}$, regarded as a $C$-super-coalgebra map on $\mathsf{V}$,
again by the co-freeness. 
\end{proof}

\begin{rem}\label{R33}
If $\mathbb{D}$ is a smooth super-coalgebra, then the largest purely even sub-super-coalgebra, say, 
$D$ of $\mathbb{D}$
is smooth, as is easily seen using the fact that every super-coalgebra map from a purely even super-coalgebra to
$\mathbb{D}$ maps into $D$. In particular, if $\operatorname{coF}^C(\mathsf{V})$ is smooth, then $C$ is; see the
remark preceding Example \ref{Ex31}.
\end{rem}

\section{The quotient $\mathbb{J}/\hspace{-1mm}/\mathbb{K}$}\label{Sec4}

Let $\mathbb{J}$ be a Hopf superalgebra, and let 
$\mathbb{K}$ be a Hopf sub-superalgebra of $\mathbb{J}$.
Then the left super-ideal $\mathbb{J}\mathbb{K}^+$ of $\mathbb{J}$ generated by the augmentation super-ideal
$\mathbb{K}^+=\operatorname{Ker}(\varepsilon_{\mathbb{K}})$ of $\mathbb{K}$ is a super-coideal; see \cite[Proposition 1]{T} or \cite[p.286]{M2}.
We denote the resulting quotient super-coalgebra of $\mathbb{J}$ by 
\begin{equation}\label{E40a}
\mathbb{J}/\hspace{-1mm}/\mathbb{K}\, (=\mathbb{J}/\mathbb{J}\mathbb{K}^+).
\end{equation}
If $\mathbb{K}$ is a normal Hopf sub-superalgebra, or namely, if $\mathbb{J}\mathbb{K}^+=\mathbb{K}^+\mathbb{J}$
(or equivalently, $\mathbb{K}$ is stable under the adjoint $\mathbb{J}$-action, see \cite[Theorem 3.10 (3)]{M2}), then $\mathbb{J}/\hspace{-1mm}/\mathbb{K}$ is a quotient Hopf
superalgebra of $\mathbb{J}$, in which case the structure of $\mathbb{J}/\hspace{-1mm}/\mathbb{K}$ is rather clear
from the results obtained in \cite[Sect. 3]{M3}. So, in what follows, main interest of ours will be in the case
where $\mathbb{K}$ is not normal. 

We are going to investigate the structures of this super-coalgebra 
and of the associated graded coalgebra $\operatorname{gr}(\mathbb{J}/\hspace{-1mm}/\mathbb{K})$;
the results will be then applied to show smoothness criteria for $\mathbb{J}/\hspace{-1mm}/\mathbb{K}$. 
One will see easily that parallel results hold for the objects 
$\mathbb{J}\backslash\hspace{-1mm}\backslash\mathbb{K}\, (=\mathbb{J}/\mathbb{K}^+\mathbb{J})$,\
$\operatorname{gr}(\mathbb{J}\backslash\hspace{-1mm}\backslash\mathbb{K})$ analogously constructed
with the side switched. 

Let $J$ and $K$ be the largest purely even Hopf sub-superalgebras of $\mathbb{J}$ and of $\mathbb{K}$
(see Section \ref{Subsec2.4}), and set
\[ \mathsf{V}_{\mathbb{J}} :=\operatorname{P}(\mathbb{J})_{1}, \quad \mathsf{V}_{\mathbb{K}}:=\operatorname{P}(\mathbb{K})_{1} \]
as in \eqref{E26a}.
These are purely odd right (and left) supermodules over $J$ and over $K$, respectively; see \eqref{E27}. 
For a while we regard these as ordinary modules, keeping their parity in mind.
The natural maps $K \to J$ and $\mathsf{V}_{\mathbb{K}} \to \mathsf{V}_{\mathbb{J}}$ induced from the inclusion $\mathbb{K} \hookrightarrow \mathbb{J}$ are injections, 
through which we can regard $K$ as a Hopf subalgebra of $J$, and $\mathsf{V}_{\mathbb{K}}$ as a $K$-submodule of $\mathsf{V}_{\mathbb{J}}$.

Define
\begin{equation}\label{E40b}
\mathsf{Q} := \mathsf{V}_{\mathbb{J}} / \mathsf{V}_{\mathbb{K}}. 
\end{equation}
We let $\mathsf{Mod}_{K}$ denote the category of right $K$-modules, which is in fact a symmetric 
monoidal category, $(\mathsf{Mod}_{K}, \otimes, K)$. Since $J$ is a coalgebra in the category, we have
the category
\begin{equation}\label{E40c}
\mathsf{Mod}_{K}^{J}\, (= (\mathsf{Mod}_{K})^{J}) 
\end{equation}
of right $J$-comodules in $\mathsf{Mod}_{K}$. 
We have the short exact sequence
\[ 0 \to \mathsf{V}_{\mathbb{K}} \to \mathsf{V}_{\mathbb{J}} \to \mathsf{Q} \to 0 \]
in $\mathsf{Mod}_{K}$. Tensored with $J$, this gives rise to the short exact sequence
\begin{equation}\label{E41}
0 \to J \otimes \mathsf{V}_{\mathbb{K}} \to J \otimes \mathsf{V}_{\mathbb{J}} \to J \otimes \mathsf{Q} \to 0
\end{equation}
in $\mathsf{Mod}_{K}^{J}$, in which
$K$ acts diagonally on each tensor product, while $J$ coacts on the single tensor factor $J$.

\begin{lemma}\label{L41}
The short exact sequence \eqref{E41} in $\mathsf{Mod}_{K}^{J}$ splits.
\end{lemma}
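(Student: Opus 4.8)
The plan is to prove Lemma~\ref{L41} by a Maschke-type argument in the style of \cite{D}, the needed ``total integral'' being supplied by the freeness of $J$ over $K$. The first step is to reduce the assertion to an extension problem for $K$-modules. A splitting of \eqref{E41} in $\mathsf{Mod}_{K}^{J}$ is the same as a retraction $\hat{g}\colon J\otimes\mathsf{V}_{\mathbb{J}}\to J\otimes\mathsf{V}_{\mathbb{K}}$ of the inclusion which is simultaneously $J$-colinear and $K$-linear. Since $J$ is cocommutative, $J$-colinear maps into a co-free comodule $J\otimes W$ (with coaction on the factor $J$) correspond bijectively to $\Bbbk$-linear maps into $W$; explicitly $\hat{g}(j\otimes v)=j_{(1)}\otimes g(j_{(2)}\otimes v)$ with $g=(\varepsilon_{J}\otimes\mathrm{id})\circ\hat{g}$. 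One checks that $\hat{g}$ retracts the inclusion exactly when $g(j\otimes w)=\varepsilon_{J}(j)\,w$ for $w\in\mathsf{V}_{\mathbb{K}}$, and that $\hat{g}$ is $K$-linear exactly when $g$ is $K$-linear for the diagonal $K$-action on $J\otimes\mathsf{V}_{\mathbb{J}}$. Thus it suffices to extend the obviously $K$-linear map $\mu\colon J\otimes\mathsf{V}_{\mathbb{K}}\to\mathsf{V}_{\mathbb{K}}$, $j\otimes w\mapsto\varepsilon_{J}(j)\,w$, to a $K$-linear map on $J\otimes\mathsf{V}_{\mathbb{J}}$ along $J\otimes\mathsf{V}_{\mathbb{K}}\hookrightarrow J\otimes\mathsf{V}_{\mathbb{J}}$.

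For the second step I would construct this extension by averaging. Fix a $\Bbbk$-linear retraction $r_{0}\colon\mathsf{V}_{\mathbb{J}}\to\mathsf{V}_{\mathbb{K}}$ of the inclusion, and suppose given a right $K$-linear map $\lambda\colon J\to K$ with $\varepsilon_{K}\circ\lambda=\varepsilon_{J}$ --- this $\lambda$ plays the role of a total integral. Define $g\colon J\otimes\mathsf{V}_{\mathbb{J}}\to\mathsf{V}_{\mathbb{K}}$ by $g(j\otimes v)=r_{0}\bigl(v\triangleleft S(\lambda(j)_{(1)})\bigr)\triangleleft\lambda(j)_{(2)}$; this lands in $\mathsf{V}_{\mathbb{K}}$ because $\lambda(j)_{(1)},\lambda(j)_{(2)}\in K$ and $\mathsf{V}_{\mathbb{K}}$ is stable under the adjoint $K$-action. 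A direct Sweedler computation --- using the cocommutativity of $K$ and the antipode identity $S(\lambda(j)_{(1)})\lambda(j)_{(2)}=\varepsilon_{K}(\lambda(j))\,1$ --- then shows that $g$ is $K$-linear for the diagonal action and that $g(j\otimes w)=\varepsilon_{K}(\lambda(j))\,w=\varepsilon_{J}(j)\,w$ for $w\in\mathsf{V}_{\mathbb{K}}$, which is exactly what was needed, and so \eqref{E41} splits. Alternatively, and a little more conceptually, freeness of $J$ over $K$ makes the diagonal $K$-module $J\otimes\mathsf{Q}$ free, hence projective, by the usual Hopf-module untwist, so that $\operatorname{Ext}^{1}_{K}(J\otimes\mathsf{Q},\mathsf{V}_{\mathbb{K}})=0$ and the extension of $\mu$ exists automatically; this is the ``Maschke'' content.

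The one ingredient that is not pure formalism, and the step I expect to be the main obstacle, is producing the total integral $\lambda$ --- equivalently, lifting the augmentation $\varepsilon_{J}\colon J\to\Bbbk$ through the $\mathsf{Mod}_{K}$-epimorphism $\varepsilon_{K}\colon K\twoheadrightarrow\Bbbk$. This works as soon as $J$ is projective as a right $K$-module, and here I would appeal to the known fact that a cocommutative Hopf algebra over a field is free over its Hopf subalgebras: one chooses a free $K$-basis of $J$ one of whose elements is $1$ and which, after a unipotent base change over $\Bbbk$, has all its other elements in $\ker\varepsilon_{J}$, and then takes $\lambda$ to be the $K$-linear projection onto the coordinate of $1$, so that $\varepsilon_{K}\circ\lambda=\varepsilon_{J}$. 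Everything else --- the bijections in the first step and the Sweedler identities in the second --- is routine manipulation with the structure of $\mathsf{Mod}_{K}^{J}$; and, as noted in the statement of the parallel results, the same argument applies verbatim with the sides switched.
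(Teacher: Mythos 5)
Your overall strategy is the same as the paper's: a Maschke-type argument dual to Doi's, whose engine is a right $K$-linear map $\lambda\colon J\to K$ with $\varepsilon_{K}\circ\lambda=\varepsilon_{J}$. Your reduction via the co-freeness of $J\otimes\mathsf{V}_{\mathbb{J}}$ as a $J$-comodule is sound, and your averaging formula $g(j\otimes v)=r_{0}\bigl(v\triangleleft S(\lambda(j)_{(1)})\bigr)\triangleleft\lambda(j)_{(2)}$ is essentially the paper's section $\sigma(n)=n^{(0)}S_{K}(\xi(n^{(1)})_{(1)})\otimes\xi(n^{(1)})_{(2)}$ of the $K$-action map, composed with a colinear section and unwound through the co-free correspondence; the cocommutativity computation you sketch is the same one the paper carries out.

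The one genuine defect is in the step you yourself single out as the main obstacle: producing $\lambda$. You appeal to the claim that a cocommutative Hopf algebra over a field is \emph{free} over any Hopf subalgebra, and you propose to choose a free $K$-basis of $J$ containing $1$. That claim is false over a non-algebraically-closed field: the paper's own remark following Proposition \ref{P43} cites \cite[Sect.~5]{T} for cocommutative $J\supset K$ with $J$ not free as a left or right $K$-module. What is true, and what the paper uses, is that $J$ is \emph{projective} as a right $K$-module (\cite[Theorem 1.3]{M}); projectivity already lets you lift the right $K$-module map $\varepsilon_{J}\colon J\to\Bbbk$ through the surjection $\varepsilon_{K}\colon K\to\Bbbk$, which is all that is needed, and no choice of basis is required. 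Your alternative route via $\operatorname{Ext}^{1}_{K}(J\otimes\mathsf{Q},\mathsf{V}_{\mathbb{K}})=0$ leans on the same unavailable freeness (projectivity of $J$ over $K$ again suffices there, via the untwisting of the diagonal action). With ``free'' replaced by ``projective'' and the basis construction replaced by the lifting property, your argument is correct and essentially identical to the paper's.
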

\begin{proof}
Clearly, the surjection $J \otimes \mathsf{V}_{\mathbb{J}} \to J \otimes \mathsf{Q}$ in \eqref{E41} splits $J$-colinearly.
Therefore, it splits, as desired, as a morphism in $\mathsf{Mod}_{K}^{J}$, as is seen from the following.
\medskip

\noindent
\textbf{Fact.}
\emph{A surjective morphism $p:\mathsf{M} \to \mathsf{N}$ in $\mathsf{Mod}_{K}^{J}$ splits if it splits $J$-colinearly.}
\medskip

This fact is proved by dualizing the argument proving \cite[Theorem 1]{D}, as follows.
Since $J$ is projective as a right $K$-module by \cite[Theorem 1.3]{M}, we have a right $K$-linear map 
$\xi: J \to K$ such that $\varepsilon_{J} =\varepsilon_{K} \circ \xi$.
Let $n \mapsto n^{(0)}\otimes n^{(1)}$ represent the the $J$-coaction $\rho_{\mathsf{N}} : \mathsf{N} \to \mathsf{N} \otimes J$ 
on $\mathsf{N}$.
By direct computations we see that 
the $K$-action $\mathsf{N} \otimes K \to \mathsf{N},\ n \otimes c \mapsto nc$ splits in $\mathsf{Mod}_{K}^{J}$, having
\[ \sigma: \mathsf{N} \to \mathsf{N} \otimes K,\ \ \sigma(n) = n^{(0)} S_{K}(\xi(n^{(1)})_{(1)}) \otimes \xi(n^{(1)})_{(2)} \]
as a section. To see, for example, that $\sigma$ is $J$-colinear, we compute the $J$-coaction on $\sigma(n)$ so that
\[
\begin{split}
\rho_{\mathsf{N}}(\sigma(n))&=n^{(0)} S_{K}(\xi(n^{(2)})_{(2)}) \otimes \xi(n^{(2)})_{(3)}\otimes n^{(1)}S_{K}(\xi(n^{(2)})_{(1)})\xi(n^{(2)})_{(4)}\\
&=n^{(0)} S_{K}(\xi(n^{(2)})_{(1)}) \otimes \xi(n^{(2)})_{(4)}\otimes n^{(1)} S_{K}(\xi(n^{(2)})_{(2)})\xi(n^{(2)})_{(3)}\\
&=n^{(0)} S_{K}(\xi(n^{(2)})_{(1)}) \otimes \xi(n^{(2)})_{(2)}\otimes n^{(1)}\\
&=\sigma(n^{(0)})\otimes n^{(1)},
\end{split}
\]
where $n^{(0)}\otimes n^{(1)}\otimes n^{(2)}$ represents $\rho_{\mathsf{N}}(n^{(0)})\otimes n^{(1)}\, 
(=n^{(0)}\otimes\Delta_J(n^{(1)}))$; notice that 
the second and the last equalities hold since $K$ and $J$ are cocommutative. 
Choose a $J$-colinear section $s: \mathsf{N} \to \mathsf{M}$ of $p$.
Then the composite
\[ \mathsf{N} \overset{\sigma}{\longrightarrow} \mathsf{N} \otimes K \xrightarrow{s \otimes \mathrm{id}_{K}} \mathsf{M} \otimes K \longrightarrow \mathsf{M}, \]
where the last arrow indicates the $K$-action on $\mathsf{M}$, is seen to be a desired section.
\end{proof}

By Lemma \ref{L41} we can choose a section $\gamma: J \otimes \mathsf{Q} \to J \otimes \mathsf{V}_{\mathbb{J}}$ of the natural surjective morphism $J \otimes \mathsf{V}_{\mathbb{J}} \to J \otimes \mathsf{Q}$ in $\mathsf{Mod}_{K}^{J}$.
Notice that it is necessarily of the form
\[ \gamma(a \otimes q) = a_{(1)} \otimes g(a_{(2)} \otimes q), \]
where $a \in J$, $q \in \mathsf{Q}$ and $g = (\varepsilon_{J} \otimes \mathrm{id}_{\mathsf{Q}}) \circ \gamma$.
Recall here that we are discussing purely odd super-objects.

In addition, recall from Example \ref{Ex32} that 
$J \otimes \wedge(\mathsf{Q})$ and $J \otimes \wedge(\mathsf{V}_{\mathbb{J}})$ 
are the co-free $J$-super-coalgebras on the purely odd $J$-super-comodules $J\otimes \mathsf{Q}$ and
$J\otimes \mathsf{V}_{\mathbb{J}}$, respectively. 
Then we see that the $J$-colinear map $\gamma$ between those $J$-super-comodules uniquely extends to a graded coalgebra map
\[ \widetilde{\gamma}: J \otimes \wedge(\mathsf{Q}) \to J \otimes \wedge(\mathsf{V}_{\mathbb{J}}), \]
such that
\[ \widetilde{\gamma}(0) = \mathrm{id}_{J},\quad \widetilde{\gamma}(1) =\gamma \]
in degrees 0 and 1, and
\[ \widetilde{\gamma}(n)(a \otimes (q_{1} \wedge \cdots \wedge q_{n})) = a_{(1)} \otimes (g(a_{(2)} \otimes q_{1}) \wedge \cdots \wedge g(a_{(n+1)} \otimes q_{n})) \]
in degree $n \geq 2$, where $a \in J$ and $q_{i} \in \mathsf{Q}, 1 \leq i \leq n$.
As a super analogue of \eqref{E40c} we have the category $\mathsf{SMod}_{K}^{J}$ of $J$-comodules
in $\mathsf{SMod}_{K}$, which is indeed a symmetric monoidal category,
\begin{equation}\label{E41a}
(\mathsf{SMod}_{K}^{J},\ \square_{J},\ J),
\end{equation}
just as $(\mathsf{SMod}^{J}, \square_J, J)$ is; see Lemma \ref{L31}. 
Notice that $J \otimes \wedge(\mathsf{Q})$ and $J \otimes \wedge(\mathsf{V}_{\mathbb{J}})$ are
co-free coalgebras in $(\mathsf{SMod}_{K}^{J},\ \square_{J},\ J)$ in a generalized sense 
(defined in an obvious manner), and 
$\widetilde{\gamma}$, arising from the morphism $\gamma$ in the category, is a coalgebra morphism in the category. 

Define a morphism in $\mathsf{SMod}_{K}^{J}$ by
\begin{equation}\label{E41b}
\tau: J \otimes \wedge(\mathsf{V}_{\mathbb{J}}) \to \wedge(\mathsf{V}_{\mathbb{J}}) \otimes J,\quad \tau(a \otimes u) = (u  \triangleleft S(a_{(1)})) \otimes a_{(2)},
\end{equation}
where $a \in J$, $u \in \wedge(\mathsf{V}_{\mathbb{J}}).$
Here, on the target $\wedge(\mathsf{V}_{\mathbb{J}}) \otimes J$, $K$ acts and $J$ coacts on the tensor factor $J$.
We see that $\tau$ is in fact a graded coalgebra isomorphism with inverse
\[ \wedge(\mathsf{V}_{\mathbb{J}})\otimes J\to  J \otimes \wedge(\mathsf{V}_{\mathbb{J}}),
\quad u \otimes a \mapsto a_{(1)} \otimes (u \triangleleft a_{(2)}). \]
Choosing a totally ordered basis $X=(x_{\lambda})_{\lambda \in \Lambda}$ of $\mathsf{V}_{\mathbb{J}}$, 
one obtains the opposite-sided version $\phi'_{X}: \wedge(\mathsf{V}_{\mathbb{J}}) \otimes J \to 
\mathbb{J}$ of the $\phi_X$ in \eqref{E28}, which is a coalgebra isomorphism in $\mathsf{SMod}_{J}$.
Define $\Gamma: J \otimes \wedge(\mathsf{Q}) \to \mathbb{J}$ to be the composite
\[ J \otimes \wedge(\mathsf{Q}) \overset{\widetilde{\gamma}}{\longrightarrow} J \otimes \wedge(\mathsf{V}_{\mathbb{J}}) \overset{\tau}{\longrightarrow} \wedge(\mathsf{V}_{\mathbb{J}}) \otimes J \xrightarrow{\phi'_{X}} \mathbb{J}. \]
This is a coalgebra morphism in $\mathsf{SMod}_{K}$, which sends $a \otimes 1_{\wedge(\mathsf{Q})}$ to $a$ for every $a \in J$.

\begin{prop}\label{P41}
The right $\mathbb{K}$-linear extension of $\Gamma$
\[ \Gamma^{\mathbb{K}}: (J \otimes \wedge(\mathsf{Q}))\otimes_{K} \mathbb{K} \to \mathbb{J},\quad \Gamma^{\mathbb{K}}(w \otimes_{K} z ) = \Gamma(w)z, \]
where $w \in J \otimes \wedge(\mathsf{Q})$,\ $z \in \mathbb{K}$, is a coalgebra isomorphism in $\mathsf{SMod}_{\mathbb{K}}$, which sends $(a \otimes 1_{\wedge(\mathsf{Q})}) \otimes_{K} 1_{\mathbb{K}}$ to 
$a$ for every $a \in J$.
\end{prop}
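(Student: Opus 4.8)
The plan is to dispose of the formal assertions about $\Gamma^{\mathbb{K}}$ quickly and then devote the bulk of the work to bijectivity, which is the only substantial point. Since $\Gamma$ is right $K$-linear, $\Gamma(wc)z=\Gamma(w)cz$ for $w\in J\otimes\wedge(\mathsf{Q})$, $c\in K$ and $z\in\mathbb{K}$, so $(w,z)\mapsto\Gamma(w)z$ factors through $\otimes_{K}$ and defines a right $\mathbb{K}$-linear map $\Gamma^{\mathbb{K}}$ with $\Gamma^{\mathbb{K}}\big((a\otimes 1_{\wedge(\mathsf{Q})})\otimes_{K}1_{\mathbb{K}}\big)=\Gamma(a\otimes 1_{\wedge(\mathsf{Q})})=a$. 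To see that the source carries a coalgebra structure in $\mathsf{SMod}_{\mathbb{K}}$, note that $J\otimes\wedge(\mathsf{Q})$ is a coalgebra in $\mathsf{SMod}_{K}$, while $\mathbb{K}$ is at once a $K$-bimodule super-coalgebra (via left and right multiplication by $K$) and a right $\mathbb{K}$-module super-coalgebra; interchanging the inner tensor factors by the cocommutativity of $K$, one checks that the smash-type coproduct $w\otimes_{K}z\mapsto(w_{(1)}\otimes_{K}z_{(1)})\otimes(w_{(2)}\otimes_{K}z_{(2)})$ and counit $w\otimes_{K}z\mapsto\varepsilon(w)\varepsilon_{\mathbb{K}}(z)$ are well defined over $K$ — the identities $w_{(i)}c_{(i)}\otimes_{K}z_{(i)}=w_{(i)}\otimes_{K}c_{(i)}z_{(i)}$ being exactly what one needs — and make $(J\otimes\wedge(\mathsf{Q}))\otimes_{K}\mathbb{K}$ a coalgebra in $\mathsf{SMod}_{\mathbb{K}}$. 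That $\Gamma^{\mathbb{K}}$ is a morphism of such coalgebras is then a short Sweedler computation: $\Gamma$ is a super-coalgebra map and the multiplication $\mathbb{J}\otimes\mathbb{J}\to\mathbb{J}$ is a super-coalgebra map, whence $\Delta_{\mathbb{J}}(\Gamma(w)z)=\Gamma(w_{(1)})z_{(1)}\otimes\Gamma(w_{(2)})z_{(2)}$ and $\varepsilon_{\mathbb{J}}\circ\Gamma=\varepsilon$.

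For bijectivity I would argue on associated graded objects. Since $K=\mathbb{K}\cap J$, one gets $F_{n}\mathbb{K}=\mathbb{K}\cap F_{n}\mathbb{J}$ for all $n$, so $\operatorname{gr}\mathbb{K}=K\ltimes\wedge(\mathsf{V}_{\mathbb{K}})$ is the obvious sub-graded-Hopf-superalgebra of $\operatorname{gr}\mathbb{J}=J\ltimes\wedge(\mathsf{V}_{\mathbb{J}})$; see \eqref{E29}. Filter the source by $F_{n}:=\sum_{i+j\le n}(J\otimes\wedge^{i}(\mathsf{Q}))\otimes_{K}F_{j}\mathbb{K}$; each $J\otimes\wedge^{i}(\mathsf{Q})$ is $K$-projective (a direct summand of a free module, since $J$ is $K$-projective by \cite[Theorem 1.3]{M} and diagonal modules over free $K$-modules are free), hence flat, so the stages embed and $\operatorname{gr}\big[(J\otimes\wedge(\mathsf{Q}))\otimes_{K}\mathbb{K}\big]=(J\otimes\wedge(\mathsf{Q}))\otimes_{K}\operatorname{gr}\mathbb{K}$. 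Because $\widetilde{\gamma}$ and $\tau$ are graded and $\phi'_{X}$ sends $\wedge^{n}(\mathsf{V}_{\mathbb{J}})\otimes J$ into $F_{n}\mathbb{J}$, the map $\Gamma$ carries $J\otimes\wedge^{\le n}(\mathsf{Q})$ into $F_{n}\mathbb{J}$, and together with $(F_{n}\mathbb{J})(F_{m}\mathbb{K})\subseteq F_{n+m}\mathbb{J}$ this makes $\Gamma^{\mathbb{K}}$ filtered; as both filtrations are exhaustive and vanish in negative degrees, it suffices to prove $\operatorname{gr}(\Gamma^{\mathbb{K}})$ bijective. At the graded level the Lie brackets in \eqref{E27a} drop out: $\operatorname{gr}(\phi'_{X})$ becomes the opposite-sided PBW isomorphism $\wedge(\mathsf{V}_{\mathbb{J}})\otimes J\xrightarrow{\sim}J\ltimes\wedge(\mathsf{V}_{\mathbb{J}})$, and both $\operatorname{gr}\mathbb{J}$ and the source $(J\otimes\wedge(\mathsf{Q}))\otimes_{K}(K\ltimes\wedge(\mathsf{V}_{\mathbb{K}}))$ are graded coalgebras with neutral component $J$ and first component $J\otimes\mathsf{V}_{\mathbb{J}}$ which, as $J$-super-coalgebras on $J\otimes\mathsf{V}_{\mathbb{J}}$, are co-free — the former by Example \ref{Ex32}. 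Since $\operatorname{gr}(\Gamma^{\mathbb{K}})$ is a graded coalgebra map equal to the identity in degrees $0$ and $1$, Remark \ref{R32} forces it to be the canonical isomorphism, and therefore $\Gamma^{\mathbb{K}}$ is bijective.

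I expect the main obstacle to be this last step: identifying $\operatorname{gr}(\Gamma^{\mathbb{K}})$ explicitly and checking that it is the identity in degree $1$. Concretely, one must trace $\widetilde{\gamma}$ and the twist $\tau$ through the passage to associated graded objects and verify that the $J$-colinear section of \eqref{E41} produced by Lemma \ref{L41} genuinely splits $\wedge(\mathsf{V}_{\mathbb{J}})$ over $\wedge(\mathsf{V}_{\mathbb{K}})$ compatibly with right multiplication by $J$ — equivalently, that $(J\otimes\wedge(\mathsf{Q}))\otimes_{K}(K\ltimes\wedge(\mathsf{V}_{\mathbb{K}}))$ is co-free on $J\otimes\mathsf{V}_{\mathbb{J}}$, which reduces to a ``cotensor product of co-free $C$-super-coalgebras over a Hopf subalgebra is again co-free'' statement that I would isolate as a separate lemma. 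The auxiliary facts $K=\mathbb{K}\cap J$, $F_{n}\mathbb{K}=\mathbb{K}\cap F_{n}\mathbb{J}$ and the commutation of $\operatorname{gr}$ with $(J\otimes\wedge(\mathsf{Q}))\otimes_{K}(-)$ are routine but worth writing out, since a gap would most plausibly hide there.
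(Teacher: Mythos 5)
Your reduction to the associated graded map follows the paper's route: the formal assertions are dismissed the same way, $\operatorname{gr}(\Gamma^{\mathbb{K}})$ is identified (via \eqref{E29} and its analogue for $\mathbb{K}$) with the composite of $\widetilde{\gamma}\otimes\mathrm{id}_{\wedge(\mathsf{V}_{\mathbb{K}})}$ with $\mathrm{id}_{J}\otimes\mathrm{prod}$, and the endgame is to invoke co-freeness together with Remark \ref{R32}. The divergence --- and the gap --- is at the decisive step. You propose to show that the source $(J\otimes\wedge(\mathsf{Q}))\otimes_{K}\operatorname{gr}\mathbb{K}$ is itself co-free on its degree-one component $J\otimes\mathsf{Q}\oplus J\otimes\mathsf{V}_{\mathbb{K}}\simeq J\otimes\mathsf{V}_{\mathbb{J}}$, and you defer this to ``a separate lemma''. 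But that lemma is not an auxiliary fact: since the target $J\otimes\wedge(\mathsf{V}_{\mathbb{J}})=\operatorname{coF}^{J}(J\otimes\mathsf{V}_{\mathbb{J}})$ is terminal among graded coalgebras with the prescribed degree-$0$ and degree-$1$ components (Remark \ref{R32}), the unique graded coalgebra map from the source to it agreeing with $\operatorname{gr}(\Gamma^{\mathbb{K}})$ in degrees $0$ and $1$ is $\operatorname{gr}(\Gamma^{\mathbb{K}})$ itself, and co-freeness of the source is \emph{equivalent} to that map being an isomorphism --- i.e., to the statement you are trying to prove. As written, the argument is circular exactly where all the content lies. (Also, your sketched reduction to ``a cotensor product of co-free super-coalgebras is co-free'' misidentifies the operation: what occurs here is $\otimes_{K}$, a tensor product over the Hopf subalgebra $K$ acting diagonally, not a cotensor product.)

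The paper closes this gap by a different device: it observes that $\operatorname{gr}(\Gamma^{\mathbb{K}})$ is a coalgebra morphism in the symmetric monoidal category $(\mathsf{SMod}_{\wedge(\mathsf{V}_{\mathbb{K}})}^{\wedge(\mathsf{V}_{\mathbb{J}})},\ \square_{\wedge(\mathsf{V}_{\mathbb{J}})},\ \wedge(\mathsf{V}_{\mathbb{J}}))$ and applies Takeuchi's equivalence \cite[Theorem 1]{T} of this category with $(\mathsf{SMod}^{\wedge(\mathsf{Q})},\ \square_{\wedge(\mathsf{Q})},\ \wedge(\mathsf{Q}))$, realized by $M\mapsto M\otimes_{\wedge(\mathsf{V}_{\mathbb{K}})}\Bbbk$. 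This reduces bijectivity of $\operatorname{gr}(\Gamma^{\mathbb{K}})$ to bijectivity of the induced graded coalgebra \emph{endomorphism} of $J\otimes\wedge(\mathsf{Q})$, which is identical in degrees $0$ and $1$ (the latter by the choice of $\gamma$ as a section) and hence is the identity by the co-freeness already available from Example \ref{Ex32}. If you wish to keep your formulation, you must supply an independent proof that the source is co-free; that is precisely the work the Takeuchi equivalence does. Your supporting observations ($K=\mathbb{K}\cap J$, and the $K$-projectivity of $J\otimes\wedge^{i}(\mathsf{Q})$ guaranteeing that $\operatorname{gr}$ commutes with $(J\otimes\wedge(\mathsf{Q}))\otimes_{K}(-)$) are correct and appear, more briefly, in the paper as well.
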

\begin{proof}
Clearly, $\Gamma^{\mathbb{K}}$ is a coalgebra morphism in $\mathsf{SMod}_{\mathbb{K}}$, which sends $(a \otimes 1_{\wedge(\mathsf{Q})}) \otimes_{K} 1_{\mathbb{K}}$ to $a$
for every $a \in J$.
To see that it is an isomorphism, we aim to prove that $\operatorname{gr}(\Gamma^{\mathbb{K}})$ is bijective.
Notice that
\[ \operatorname{gr}(\phi'_{X} \circ \tau) = \operatorname{gr}(\phi'_{X}) \circ \tau : J\otimes \wedge(\mathsf{V}_{\mathbb{J}}) \to \operatorname{gr}\mathbb{J}  \]
coincides with the canonical isomorphism
$J \ltimes \wedge(\mathsf{V}_{\mathbb{J}})\overset{\simeq}{\longrightarrow}\operatorname{gr}\mathbb{J}$ 
in \eqref{E29}.
Using the similar isomorphism 
$K \ltimes \wedge(\mathsf{V}_{\mathbb{K}})\overset{\simeq}{\longrightarrow}\operatorname{gr}\mathbb{K}$,
one sees that $\operatorname{gr}(\Gamma^{\mathbb{K}})$ coincides with the composite
\[ J \otimes \wedge(\mathsf{Q}) \otimes \wedge(\mathsf{V}_{\mathbb{K}}) \xrightarrow{\widetilde{\gamma}\otimes \mathrm{id}_{\wedge(\mathsf{V}_{\mathbb{K}})}} J \otimes \wedge(\mathsf{V_{\mathbb{J}}}) \otimes \wedge(\mathsf{V}_{\mathbb{K}}) \xrightarrow{\mathrm{id}_{J} \otimes \mathrm{prod}} J \otimes \wedge(\mathsf{V_{\mathbb{J}}}), \]
where $\mathrm{prod}: \wedge(\mathsf{V}_{\mathbb{J}}) \otimes \wedge(\mathsf{V}_{\mathbb{K}}) \to \wedge(\mathsf{V}_{\mathbb{J}})$ denotes the product in $\wedge(\mathsf{V}_{\mathbb{J}})$.

Regard $\wedge(\mathsf{V}_{\mathbb{K}})\subset \wedge(\mathsf{V}_{\mathbb{J}})$
as Hopf superalgebras, naturally
as in the beginning of Section \ref{Subsec2.3}. 
Clearly, $\operatorname{gr}(\Gamma^{\mathbb{K}})$ is a coalgebra morphism in $\mathsf{SMod}_{\wedge(\mathsf{V}_{\mathbb{K}})}$.
Since $\varepsilon \otimes \mathrm{id}_{\wedge(\mathsf{V}_{\mathbb{K}})} : J \otimes \wedge(\mathsf{V}_{\mathbb{J}}) \to \wedge(\mathsf{V}_{\mathbb{J}})$ is a coalgebra morphism in $\mathsf{SMod}_{\wedge(\mathsf{V}_{\mathbb{K}})}$,
we can regard $\operatorname{gr}(\Gamma^{\mathbb{K}})$ as a $\wedge(\mathsf{V}_{\mathbb{J}})$-coalgebra morphism 
in $\mathsf{SMod}_{\wedge(\mathsf{V}_{\mathbb{K}})}$ (in a sense slightly generalized from Definition \ref{D31}), 
or in other words (see Lemma \ref{L32}), as a coalgebra morphism in 
\[ (\mathsf{SMod}_{\wedge(\mathsf{V}_{\mathbb{K}})}^{\wedge(\mathsf{V}_{\mathbb{J}})},\ \square_{\wedge(\mathsf{V}_{\mathbb{J}})},\ \wedge(\mathsf{V}_{\mathbb{J}})), \]
which is indeed a symmetric monoidal category just as the one in \eqref{E41a}. 
Since $\wedge(\mathsf{V}_{\mathbb{J}})/\hspace{-1mm}/\wedge\! (\mathsf{V}_{\mathbb{K}})$
is naturally isomorphic to $\wedge(\mathsf{Q})$,  it follows essentially by \cite[Theorem 1]{T} (see also \cite[Proposition 1.1]{M2}) 
that the symmetric monoidal category above is equivalent to
\[ (\mathsf{SMod}^{\wedge(\mathsf{Q})},\ \square_{\wedge(\mathsf{Q})},\ \wedge(\mathsf{Q})) \]
through the functor which assigns to an object $M$ in the former category,
\[ M \otimes_{\wedge(\mathsf{V}_{\mathsf{K}})} \Bbbk \, (= M/M(\wedge(\mathsf{V}_{\mathbb{K}}))^+),  \]
where one should notice $(\wedge(\mathsf{V}_{\mathbb{K}}))^+=\bigoplus_{n>0}\wedge^n(\mathsf{V}_{\mathbb{K}})$.
Therefore, for our aim, it suffices to prove that
\[ \operatorname{gr}(\Gamma^{\mathbb{K}}) \otimes_{\wedge(\mathsf{V}_{\mathbb{K}})} \Bbbk: J \otimes \wedge(\mathsf{Q}) \to J \otimes \wedge(\mathsf{V}_{\mathbb{J}}) \otimes_{\wedge(\mathsf{V}_{\mathbb{K}})} \Bbbk = J \otimes \wedge(\mathsf{Q}) \]
is bijective, or is indeed the identity map.
One sees that this is a graded coalgebra endomorphism which is identical clearly in degree $0$, and also in degree $1$
by choice of $\gamma$. 
By co-freeness of $J \otimes \wedge(\mathsf{Q})$ (see Example \ref{Ex32}) it must be the identity map 
in view of Remark \ref{R32}. 
\end{proof}

Now, we regard $\mathsf{V}_{\mathbb{K}} \subset \mathsf{V}_{\mathbb{J}}$ as left $K$-modules with respect to the left adjoint action (see \eqref{E27}), 
so that $\mathsf{Q}$ is a left $K$-module, and $\wedge(\mathsf{Q})$ is a coalgebra in the monoidal
category $_{K}\mathsf{SMod}$ of left $K$-supermodules.
Construct $J \otimes_{K} \wedge(\mathsf{Q})$.
This is a graded coalgebra such that 
\begin{equation}\label{E41c}
(J \otimes_{K} \wedge(\mathsf{Q}))(0)=J/\hspace{-1mm}/K\, (:=J/JK^+),\quad
(J \otimes_{K} \wedge(\mathsf{Q}))(1)=J \otimes_{K}\mathsf{Q},
\end{equation}
whence it is a $J/\hspace{-1mm}/K$-super-coalgebra with respect to the projection onto the neutral component;
notice that the associated $J/\hspace{-1mm}/K$-coaction on $J \otimes_{K}\mathsf{Q}$ (see \eqref{E34})
is the natural one on the tensor factor $J$. 
Moreover, we have the following.

\begin{prop}\label{P42}
$J \otimes_{K} \wedge(\mathsf{Q})$ is co-free on $J \otimes_{K} \mathsf{Q}$.
\end{prop}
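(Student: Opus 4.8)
The plan is to identify $\operatorname{coF}^{J/\!/K}(J\otimes_K\mathsf{Q})$ explicitly and match it against $J\otimes_K\wedge(\mathsf{Q})$ by the universal property, using the co-freeness results already established for the ``total space'' $J\otimes\wedge(\mathsf{Q})$. The key observation is that the quotient map $J\to J/\!/K$ and the tensor-down functor $-\otimes_K\Bbbk$ convert $K$-equivariant co-free objects into co-free objects over the quotient coalgebra; this is the same mechanism used in the proof of Proposition \ref{P41}, where $\operatorname{gr}(\Gamma^{\mathbb{K}})\otimes_{\wedge(\mathsf V_{\mathbb{K}})}\Bbbk$ was recognized as an endomorphism of the co-free object $J\otimes\wedge(\mathsf{Q})$.

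First I would recall from Remark \ref{R32} that it suffices to show $J\otimes_K\wedge(\mathsf{Q})$ is a terminal object in the category of graded coalgebras $\mathbb{D}$ with $\mathbb{D}(0)=J/\!/K$ and $\mathbb{D}(1)=J\otimes_K\mathsf{Q}$, morphisms being identical in degrees $0$ and $1$; equivalently, that the canonical graded-coalgebra endomorphisms are all the identity. Given any $J/\!/K$-super-coalgebra $(\mathbb{E},\varpi)$ on $J\otimes_K\mathsf{Q}$, I would construct the comparison map $f\colon\mathbb{E}\to J\otimes_K\wedge(\mathsf{Q})$ degreewise by the formula analogous to \eqref{E32}: iterate the coproduct of $\mathbb{E}$ and apply $\varpi^{\square n}$ into $(J\otimes_K\mathsf{Q})^{\square_{J/\!/K}\, n}$, landing in the $\mathfrak{S}_n$-invariants by super-cocommutativity; the sum is finite since $\operatorname{Corad}\mathbb{E}$ is purely even and hence killed by $\varpi$. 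The identifications \eqref{E41c} of the degree-$0$ and degree-$1$ parts make this an object of the correct category, and the shuffle formula for the coproduct of $J\otimes_K\wedge(\mathsf{Q})$ — transported from that of $\wedge(\mathsf{Q})$ in $_K\mathsf{SMod}$ via $J\otimes_K-$, exactly as in Section \ref{Subsec2.3} — shows $f$ is a coalgebra map, while the identity $\bigl(J\otimes_K\wedge(\mathsf{Q})\bigr)\xrightarrow{\Delta_{n-1}}\xrightarrow{\pi^{\square n}}$ equals the projection to degree $n$ gives uniqueness.

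The main obstacle, and the point requiring care, is that the cotensor powers $(J\otimes_K\mathsf{Q})^{\square_{J/\!/K}\,n}$ are taken over $J/\!/K$, which need not be a Hopf algebra (since $K$ is typically not normal), so I cannot directly invoke the module-comodule equivalence of \cite{T} as in Proposition \ref{P41}. Instead I would argue more directly: the graded coalgebra $J\otimes\wedge(\mathsf{Q})=\operatorname{coF}^J(J\otimes\mathsf{Q})$ carries a left $K$-action (the diagonal action, $K$ acting on $J$ by right translation and on $\mathsf{Q}$ by the left adjoint action), compatible with the grading and the coalgebra structure, and $J\otimes_K\wedge(\mathsf{Q})$ is its quotient $\bigl(J\otimes\wedge(\mathsf{Q})\bigr)\otimes_K\Bbbk$ by the left $K^+$-action. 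The functor $-\otimes_K\Bbbk$ from left-$K$-equivariant graded coalgebras (over the $K$-equivariant base $J$) to graded coalgebras over $J\otimes_K\Bbbk=J/\!/K$ is monoidal for the relevant cotensor products — this is where a short verification is needed, using that $J$ is faithfully flat (indeed free) as a $K$-module by \cite[Theorem 1.3]{M} so that $-\otimes_K\Bbbk$ commutes with the equalizers defining the cotensor products — and it carries the $K$-equivariant co-free object $\operatorname{coF}^J(J\otimes\mathsf{Q})$ to the co-free object $\operatorname{coF}^{J/\!/K}(J\otimes_K\mathsf{Q})$, because the universal property transports along this equivalence. Concretely this means: any $J/\!/K$-super-coalgebra $\mathbb{E}$ on $J\otimes_K\mathsf{Q}$ pulls back to a $K$-equivariant $J$-super-coalgebra on $J\otimes\mathsf{Q}$ — namely $\mathbb{E}\otimes^{J/\!/K}J$, or more simply one checks the comparison map $f$ constructed above is well-defined and unique by base-changing the already-known universal property of $\operatorname{coF}^J(J\otimes\mathsf{Q})=J\otimes\wedge(\mathsf{Q})$ from Example \ref{Ex32}. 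Once the faithful flatness bookkeeping is in place, the conclusion follows formally.
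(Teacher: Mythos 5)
Your route is genuinely different from the paper's. The paper does not verify the universal property over $\Bbbk$ at all: it takes the canonical graded-coalgebra map $J\otimes_K\wedge(\mathsf{Q})\to\operatorname{coF}^{J/\hspace{-1mm}/K}(J\otimes_K\mathsf{Q})$ (identical in degrees $0$ and $1$), base-extends to $\overline{\Bbbk}$ where $J$ is pointed, invokes the isomorphism $J\simeq J/\hspace{-1mm}/K\otimes K$ of \cite[Theorem 1.3 (4)]{M} to identify the source with $J/\hspace{-1mm}/K\otimes\wedge(\mathsf{Q})$, and then concludes by Example \ref{Ex32} and Remark \ref{R32}. Your plan instead stays over $\Bbbk$ and descends co-freeness along the monoidal equivalence $\mathsf{SMod}_K^J\simeq\mathsf{SMod}^{J/\hspace{-1mm}/K}$, $M\mapsto M\otimes_K\Bbbk$; this is exactly the mechanism the paper itself uses in the proof of Proposition \ref{P41} for the pair $\wedge(\mathsf{V}_{\mathbb{K}})\subset\wedge(\mathsf{V}_{\mathbb{J}})$, and it can be made to work here, with the bonus of avoiding the base-change step. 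What the paper's route buys is that the same computation simultaneously yields Proposition \ref{P43} in cases (i) and (ii).

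Two points in your justification of the key step are wrong as stated, however. First, $J$ is in general \emph{not} free as a $K$-module over a non-algebraically closed field --- the paper's own remark following Proposition \ref{P43} says precisely this; what \cite[Theorem 1.3]{M} gives over an arbitrary $\Bbbk$ is projectivity and faithful flatness, and faithful flatness is what Takeuchi's equivalence actually requires, so drop the parenthetical ``indeed free''. Second, and more seriously, flatness of $J$ over $K$ does not make $-\otimes_K\Bbbk$ commute with the equalizers defining $\square_J$: that functor is right exact, $\Bbbk=K/K^+$ is not flat over $K$, and kernels are not preserved by right exact functors. The monoidality of the equivalence must instead be obtained the other way around, via the quasi-inverse $V\mapsto V\,\square_{J/\hspace{-1mm}/K}\,J$ together with the (faithful) coflatness of $J$ over $J/\hspace{-1mm}/K$, using associativity of cotensor products --- i.e., by \cite[Theorem 1]{T} as invoked in the proof of Proposition \ref{P41} --- after which $(M\,\square_J\,N)\otimes_K\Bbbk\simeq(M\otimes_K\Bbbk)\,\square_{J/\hspace{-1mm}/K}\,(N\otimes_K\Bbbk)$ follows formally. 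With that repair, and a check that the terminal object of the non-equivariant category of $J$-super-coalgebras on $J\otimes\mathsf{Q}$ remains terminal in the $K$-equivariant one (which holds because the universal map $f_n=\varpi^{\square n}\circ\Delta_{n-1}$ is canonical, hence $K$-linear when the data are), your argument goes through.
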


This proposition and the following will be proved below together.

\begin{prop}\label{P43}
Assume one of the following (i)--(iv):
\begin{itemize}
\item[(i)] $J$ is pointed (this is satisfied if $\Bbbk$ is algebraically closed);
\item[(ii)] $K$ is finite-dimensional;
\item[(iii)] $\mathsf{V}_{\mathbb{K}}$ is stable in $\mathsf{V}_{\mathbb{J}}$ under the $J$-action (this is satisfied if $\mathbb{K}$ is normal in $\mathbb{J}$);
\item[(iv)] 
\begin{itemize}
\item[(a)] $K$ is smooth as a coalgebra (this is satisfied if $\operatorname{char}\Bbbk = 0$), and 
\item[(b)] $K$ includes the coradical $\operatorname{Corad} J$ of $J$.
\end{itemize}
\end{itemize}
Then there is a $J/\hspace{-1mm}/K$-colinear isomorphism $J \otimes_{K} \mathsf{Q}\simeq J/\hspace{-1mm}/K \otimes \mathsf{Q}$. It extends to an isomorphism of graded coalgebras
\begin{equation}\label{E42}
J \otimes_{K} \wedge(\mathsf{Q}) \simeq J/\hspace{-1mm}/K \otimes \wedge(\mathsf{Q})
\end{equation}
which is the identity map of $J/\hspace{-1mm}/K$ in degree $0$, and is the 
previous isomorphism in degree $1$.
\end{prop}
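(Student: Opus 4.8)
Write $\bar J:=J/\hspace{-1mm}/K$ for brevity. The plan is to obtain the graded-coalgebra isomorphism \eqref{E42} from its degree-$1$ part by co-freeness, and to construct that degree-$1$ part — a $\bar J$-colinear isomorphism $J\otimes_K\mathsf{Q}\simeq\bar J\otimes\mathsf{Q}$ — separately under each of (i)--(iv). The four hypotheses split into two kinds: (iii) is a hypothesis on the $K$-module $\mathsf{Q}$, under which the isomorphism comes from a direct ``untwisting''; (i), (ii) and (iv) are hypotheses forcing the extension $K\subseteq J$ to be \emph{cleft}, i.e.\ to have the normal basis property, under which the isomorphism holds with $\mathsf{Q}$ replaced by an arbitrary $K$-module.

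For the reduction, recall that $J\otimes_K\wedge(\mathsf{Q})$ is, by Proposition \ref{P42}, the co-free $\bar J$-super-coalgebra on $J\otimes_K\mathsf{Q}$, while $\bar J\otimes\wedge(\mathsf{Q})=\operatorname{coF}^{\bar J}(\bar J\otimes\mathsf{Q})$ by Example \ref{Ex32} (with $C=\bar J$, $\mathsf{W}=\mathsf{Q}$). Now $\operatorname{coF}^{\bar J}$ is functorial: an isomorphism $\alpha\colon V\xrightarrow{\sim}V'$ of purely odd $\bar J$-super-comodules makes $\operatorname{coF}^{\bar J}(V')$, via $\alpha^{-1}$ composed with its structure map, into a $\bar J$-super-coalgebra on $V$, so the co-freeness of $\operatorname{coF}^{\bar J}(V)$ (Definition \ref{D32}, Proposition \ref{P31}) produces a unique morphism $\operatorname{coF}^{\bar J}(V')\to\operatorname{coF}^{\bar J}(V)$ of $\bar J$-super-coalgebras on $V$; with the symmetric construction this yields mutually inverse isomorphisms, which by Remark \ref{R32} are graded-coalgebra maps restricting to $\mathrm{id}_{\bar J}$ in degree $0$ and to $\alpha^{\pm1}$ in degree $1$. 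Hence any $\bar J$-colinear isomorphism $J\otimes_K\mathsf{Q}\simeq\bar J\otimes\mathsf{Q}$ extends to the isomorphism \eqref{E42} with the asserted behaviour, and it suffices to produce one.

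Under (iii), $\mathsf{V}_{\mathbb{K}}$ is a $J$-submodule of $\mathsf{V}_{\mathbb{J}}$, so $\mathsf{Q}$ is a left $J$-module for the adjoint action $\triangleright$; writing $p\colon J\to\bar J$ for the projection, the assignments $j\otimes_K q\mapsto p(j_{(1)})\otimes j_{(2)}\triangleright q$ and $p(j)\otimes q\mapsto j_{(1)}\otimes_K S(j_{(2)})\triangleright q$ are well defined (the first since $p(jk)=\varepsilon(k)p(j)$ for $k\in K$; the second since it annihilates $JK^+$, using $k_{(1)}S(k_{(2)})=\varepsilon(k)1$ for $k\in K$ and that elements of $K$ pass across $\otimes_K$), are mutually inverse by the antipode identities, and are $\bar J$-colinear by the cocommutativity of $J$; this is the desired isomorphism. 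Under (i), (ii) or (iv), I would instead prove the normal basis property, namely an isomorphism $J\simeq\bar J\otimes K$ of right $\bar J$-comodules and right $K$-modules, with $\bar J$ coacting on, and $K$ acting by right multiplication on, the respective tensor factor; granting it, for any left $K$-module $M$ one gets $J\otimes_K M\simeq(\bar J\otimes K)\otimes_K M=\bar J\otimes M$ $\bar J$-colinearly, which for $M=\mathsf{Q}$ is what is needed. Equivalently one wants a convolution-invertible $\bar J$-colinear ``cleaving'' section of $p$ (a section of the formal-scheme quotient $\mathfrak{G}\to\mathfrak{G}/\mathfrak{H}$), obtained as follows. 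In (i), $J$ and hence $\bar J$ are pointed, and the cleaving section is built by induction along the coradical filtration, starting from a set-theoretic section of the projection of $G(J)$ onto the underlying set of $G(\bar J)$ — i.e.\ of the coset space — and lifting one filtration layer at a time, the obstruction in each layer vanishing because the coradical is a group algebra. In (ii), $K$ finite-dimensional makes $J$ free over $K$, and the normal basis property for finite-dimensional Hopf subalgebras is classical (alternatively it can be produced from a left integral of $K$). In (iv), $\operatorname{Corad}J\subseteq K$ makes $K\hookrightarrow J$ essential, so the smoothness of $K$ supplies a coalgebra retraction $r\colon J\to K$; regarded as an endomorphism of $J$, $r$ is the identity on $\operatorname{Corad}J$ and hence convolution-invertible (Takeuchi's lemma), and the cleaving section is assembled from $r$ and its convolution inverse (in characteristic $0$, $K$ is automatically smooth, so (iv)(a) is then free). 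These cleftness statements are standard Hopf-algebraic facts on the normal basis property of such extensions.

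The main obstacle is case (i): carrying out the inductive construction of the cleaving section along the coradical filtration while keeping $\bar J$-colinearity and convolution-invertibility at every stage and checking that the relevant Sweedler $2$-cocycle obstruction vanishes in the pointed setting. Case (iv), though conceptually the cleanest, still needs the step upgrading the bare coalgebra retraction to a genuine cleaving map, and case (ii) relies on quoting (or reproving) the correct normal basis theorem. A minor but pervasive bookkeeping issue is keeping the left/right conventions for the $K$-action and the $\bar J$-coaction consistent when passing between $J\otimes_K(-)$ and $\bar J\otimes(-)$.
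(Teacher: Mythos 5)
Your overall route is the same as the paper's: reduce \eqref{E42} to its degree-$1$ part by co-freeness of both sides (Proposition \ref{P42}, Example \ref{Ex32}, Remark \ref{R32}); handle (iii) by the explicit untwisting $a\otimes_K q\mapsto \overline{a}_{(1)}\otimes(a_{(2)}\triangleright q)$ with inverse $\overline{a}\otimes q\mapsto a_{(1)}\otimes_K(S(a_{(2)})\triangleright q)$ --- your formulas are literally the paper's \eqref{E45}; and handle (i), (ii), (iv) by producing a $J/\hspace{-1mm}/K$-colinear \emph{and right $K$-linear} normal-basis isomorphism $J\simeq J/\hspace{-1mm}/K\otimes K$ as in \eqref{E44}, then applying $\otimes_K\mathsf{Q}$. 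The difference is that the steps you flag as the main obstacles are precisely the ones the paper closes by citation rather than by construction: for (i) and (ii) the isomorphism \eqref{E44} is \cite[Theorem 1.3 (4)]{M} (cocommutativity of $J$ is what makes the finite-dimensional and pointed cases uniform there), so no inductive construction of a cleaving section along the coradical filtration is required; for (iv) the paper invokes a cocommutative version of \cite[Theorem 4.1, Lemma 4.2]{M1} to obtain a coalgebra retraction $r:J\to K$ that is moreover right $K$-linear, after which $a\mapsto\overline{a}_{(1)}\otimes r(a_{(2)})$ is directly the normal-basis isomorphism.

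The one place where your sketch, as written, would need real work is case (iv). Smoothness of $K$ together with essentialness of $K\hookrightarrow J$ (which is where (iv)(b) enters) yields only \emph{some} coalgebra retraction $r$, with no $K$-linearity; and a bare retraction does not by itself give \eqref{E44} --- the map $a\mapsto\overline{a}_{(1)}\otimes r(a_{(2)})$ is then neither obviously $K$-linear nor obviously invertible. Your proposed repair (Takeuchi's convolution-invertibility lemma plus assembling a cleaving section from $r$ and its convolution inverse) is plausible but is essentially a reproof of the cited lemma, not a shortcut around it. Likewise, in your reduction from $J\simeq J/\hspace{-1mm}/K\otimes K$ to $J\otimes_K\mathsf{Q}\simeq J/\hspace{-1mm}/K\otimes\mathsf{Q}$, it is the right $K$-linearity of \eqref{E44} (with $K$ acting by right multiplication on the tensor factor $K$) that you must insist on, since that is what makes $(J/\hspace{-1mm}/K\otimes K)\otimes_K\mathsf{Q}=J/\hspace{-1mm}/K\otimes\mathsf{Q}$; you note this bookkeeping issue yourself, and it is the only hypothesis on the isomorphism beyond colinearity that the argument actually uses.
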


\begin{proof}[Proof of Propositions \ref{P42} and \ref{P43}]
By \eqref{E41c}
we have a canonical morphism
\begin{equation}\label{E43}
J \otimes_{K} \wedge(\mathsf{Q}) \to \mathrm{coF}^{J/\hspace{-1mm}/K}(J \otimes_{K} \mathsf{Q})
\end{equation}
of graded coalgebras, which is identical in degrees 0 and 1.
We wish to show that this is an isomorphism, which will prove Proposition \ref{P42}.
Replacing everything with its base extension to the algebraic closure $\overline{\Bbbk}$ of $\Bbbk$, 
we may suppose that $\Bbbk$ is algebraically closed.
Then $J$ is pointed, whence we have a $J/\hspace{-1mm}/K$-colinear and right $K$-linear isomorphism
\begin{equation}\label{E44}
J \simeq J/\hspace{-1mm}/K \otimes K 
\end{equation}
by \cite[Theorem 1.3 (4)]{M}. 
With $\otimes_K\mathsf{Q}$ and $\otimes_K \! \wedge\! (\mathsf{Q})$ applied, this induces
a $J/\hspace{-1mm}/K$-colinear isomorphism 
$J \otimes_{K} \mathsf{Q}\simeq J/\hspace{-1mm}/K \otimes \mathsf{Q}$ and an isomorphism
such as \eqref{E42}.
The morphism \eqref{E43} is then identified, through the isomorphisms just obtained, with a graded coalgebra map
\[ J/\hspace{-1mm}/K \otimes \wedge(\mathsf{Q}) \to \mathrm{coF}^{J/\hspace{-1mm}/K}(J/\hspace{-1mm}/K \otimes \mathsf{Q}) \]
which is identical in degrees 0 and 1.
By co-freeness of $J/\hspace{-1mm}/K \otimes \wedge(\mathsf{Q})$ (see Example \ref{Ex32})
this must be an isomorphism in view of Remark \ref{R32}. 

Let us turn to Proposition \ref{P43}. 
The argument above proves it in Cases (i) and (ii), since we then have an isomorphism such as 
\eqref{E44} by \cite[Theorem 1.3 (4)]{M}, again.
In Case (iv), as well, we have such an isomorphism. To see this, assume (iv). 
By modifying the proof of \cite[Theorem 4.1, Lemma 4.2]{M1} into the cocommutative situation, we
see that the inclusion $K \hookrightarrow J$ has a right $K$-linear coalgebra retraction, say, $r : J \to K$,
and
\[ J \to J/\hspace{-1mm}/K \otimes K,\quad a \mapsto \overline{a}_{(1)} \otimes r(a_{(2)}) \]
gives a desired isomorphism. Here and below we let $a \mapsto \overline{a}$ present 
the natural projection $J \to J/\hspace{-1mm}/K$.

Finally, assume (iii). Then $\mathsf{Q}$ is naturally a left $J$-module. 
We see that
\begin{equation}\label{E45}
J \otimes_{K} \mathsf{Q} \to J/\hspace{-1mm}/K \otimes \mathsf{Q},\quad a \otimes_{K} q \mapsto \overline{a}_{(1)} \otimes (a_{(2)} \triangleright q)
\end{equation}
is a $J/\hspace{-1mm}/K$-colinear isomorphism, which indeed has $\overline{a} \otimes q \mapsto a_{(1)} \otimes_{K} (S(a_{(2)}) \triangleright q)$ as an inverse; see \cite[p.456, line 6]{T} or \cite[Eq. (4)]{M1}. 
An obvious modification with $\mathsf{Q}$ replaced by $\wedge(\mathsf{Q})$
gives an isomorphism such as \eqref{E42}. Alternatively, the isomorphism follows from \eqref{E45}, in virtue of
the co-freeness shown by Proposition \ref{P42} and Example \ref{E32}. 
\end{proof}

\begin{rem}
It is known that there exist (cocommutative) Hopf algebras $J\supset K$ over a non-algebraically closed field,
for which there does not exist any 
isomorphism such as \eqref{E44}, or $J$ is not even free as a left or right $K$-module; see \cite[Sect. 5]{T}, for example.
\end{rem}

\begin{theorem}\label{T41}
We have the following.
\begin{itemize}
\item[(1)] There is an isomorphism
\[ J \otimes_{K} \wedge(\mathsf{Q}) \overset{\simeq}{\longrightarrow} \mathbb{J}/\hspace{-1mm}/\mathbb{K} \]
of super-coalgebras, which, restricted to
\[ J/\hspace{-1mm}/K\, (= J \otimes_{K} \wedge^{0}(\mathsf{Q}))\quad \text{and}\quad J \otimes_{K} \mathsf{Q}\, (= J \otimes_{K} \wedge^{1}(\mathsf{Q})), \]
is natural in the sense that it is induced from the natural projection $\mathbb{J} \to \mathbb{J}/\hspace{-1mm}/\mathbb{K}$ restricted to
\[ J\quad \text{and}\quad J \otimes \mathsf{V}_{\mathbb{J}}\ \, (\text{see \eqref{E28a}}). \]
\item[(2)] $\operatorname{gr}\mathbb{K}$ is naturally regarded as a graded Hopf sub-superalgebra of $\operatorname{gr}\mathbb{J}$.
The natural projection $\operatorname{gr}\mathbb{J} \to \operatorname{gr}(\mathbb{J}/\hspace{-1mm}/\mathbb{K})$ induces an isomorphism
\begin{equation}\label{E46}
\operatorname{gr}\mathbb{J}/\hspace{-1mm}/\operatorname{gr}\mathbb{K} \overset{\simeq}{\longrightarrow} \operatorname{gr}(\mathbb{J}/\hspace{-1mm}/\mathbb{K})
\end{equation}
of graded coalgebras.
These graded coalgebras are naturally isomorphic to the co-free $J/\hspace{-1mm}/K$-super-coalgebra 
$J \otimes_{K} \wedge(\mathsf{Q})$ on $J \otimes_{K} \mathsf{Q}$; see Proposition \ref{P42}.
\end{itemize}
\end{theorem}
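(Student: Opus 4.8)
The plan is to push the isomorphism $\Gamma^{\mathbb{K}}$ of Proposition \ref{P41} down modulo $\mathbb{K}^{+}$. Applying the functor $\mathbb{M}\mapsto\mathbb{M}/\mathbb{M}\mathbb{K}^{+}\;(=\mathbb{M}\otimes_{\mathbb{K}}\Bbbk)$ to the right $\mathbb{K}$-linear super-coalgebra isomorphism $\Gamma^{\mathbb{K}}\colon(J\otimes\wedge(\mathsf{Q}))\otimes_{K}\mathbb{K}\xrightarrow{\simeq}\mathbb{J}$, right $\mathbb{K}$-linearity carries $((J\otimes\wedge(\mathsf{Q}))\otimes_{K}\mathbb{K})\mathbb{K}^{+}$ onto $\mathbb{J}\mathbb{K}^{+}$, so the induced map on quotients is bijective; being a quotient of a super-coalgebra map by super-coideals, it is a super-coalgebra isomorphism
\[
(J\otimes\wedge(\mathsf{Q}))\otimes_{K}\Bbbk\ \xrightarrow{\ \simeq\ }\ \mathbb{J}/\hspace{-1mm}/\mathbb{K},
\]
where $\Bbbk=\mathbb{K}/\mathbb{K}^{+}$ is the trivial left $K$-module. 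It then remains to identify the source with $J\otimes_{K}\wedge(\mathsf{Q})$ of the statement: there $J\otimes\wedge(\mathsf{Q})$ carries the diagonal right $K$-action (multiplication on $J$, right adjoint action on $\wedge(\mathsf{Q})$) used in Proposition \ref{P41}, while in the statement $\wedge(\mathsf{Q})$ is a left $K$-module coalgebra via the left adjoint action; but on representatives the obvious map $\overline{a\otimes u}\mapsto a\otimes_{K}u$ is well defined in both directions, since, modulo one another, the two defining relations agree — this reduces to the identity $v\triangleleft k=S(k)\triangleright v$ together with the counit/antipode axioms for the cocommutative $K$. Being the obvious map, it is a super-coalgebra isomorphism, identical in degrees $0$ and $1$. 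Composing yields the desired super-coalgebra isomorphism $J\otimes_{K}\wedge(\mathsf{Q})\xrightarrow{\simeq}\mathbb{J}/\hspace{-1mm}/\mathbb{K}$; alternatively one may observe that both sides are co-free $J/\hspace{-1mm}/K$-super-coalgebras on $J\otimes_{K}\mathsf{Q}$ (by Proposition \ref{P42} and the analogous equivalence $\mathsf{SMod}_{K}^{J}\simeq\mathsf{SMod}^{J/\hspace{-1mm}/K}$ used in the proof of Proposition \ref{P41}) and invoke uniqueness.

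For the naturality assertion, I would trace the composite in degrees $0$ and $1$. In degree $0$ each step is the evident map, so the isomorphism sends $\overline{a\otimes_{K}1}$ $(a\in J)$ to the class of $a$ in $\mathbb{J}/\hspace{-1mm}/\mathbb{K}$, i.e., is induced by $\mathbb{J}\to\mathbb{J}/\hspace{-1mm}/\mathbb{K}$ restricted to $J$. For degree $1$, note first that $J\otimes\mathsf{V}_{\mathbb{J}}\hookrightarrow\mathbb{J}\to\mathbb{J}/\hspace{-1mm}/\mathbb{K}$, $a\otimes v\mapsto\overline{av}$, kills $J\otimes\mathsf{V}_{\mathbb{K}}$ (as $\mathsf{V}_{\mathbb{K}}\subset\mathbb{K}^{+}$), and satisfies $\overline{akv}=\overline{a(k\triangleright v)}$ for $a\in J$, $k\in K$, $v\in\mathsf{V}_{\mathbb{J}}$ — because $kv=\sum(k_{(1)}\triangleright v)k_{(2)}$ and the tail $\sum a(k_{(1)}\triangleright v)(k_{(2)}-\varepsilon(k_{(2)})1)$ lies in $\mathbb{J}\mathbb{K}^{+}$. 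Hence it factors through $J\otimes_{K}\mathsf{Q}$, and comparing with the degree-$1$ component $\phi'_{X}\circ\tau\circ\gamma$ of $\Gamma$ (using that $\gamma$ is a section) shows the factored map is exactly the degree-$1$ part of our isomorphism. This naturality bookkeeping, together with the compatibility of the two quotient relations, is the fiddliest point of part (1); everything else is formal.

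For part (2), first observe $F_{n}\mathbb{K}=F_{n}\mathbb{J}\cap\mathbb{K}$ for all $n$: one has $K=\mathbb{K}\cap J$ because $\Delta_{\mathbb{K}}$ is the restriction of $\Delta_{\mathbb{J}}$, whence $\mathbb{K}/K\hookrightarrow\mathbb{J}/J$ and the defining kernels match; consequently $\mathbb{K}\hookrightarrow\mathbb{J}$ induces an injection $\operatorname{gr}\mathbb{K}\hookrightarrow\operatorname{gr}\mathbb{J}$ of graded Hopf superalgebras. Now $\operatorname{gr}\mathbb{J}$ is a Hopf superalgebra whose largest purely even Hopf sub-superalgebra is $(\operatorname{gr}\mathbb{J})(0)=J$ and with $\mathsf{V}_{\operatorname{gr}\mathbb{J}}=\operatorname{P}(\operatorname{gr}\mathbb{J})_{1}=\mathsf{V}_{\mathbb{J}}$ (see \eqref{E29}), and likewise for $\operatorname{gr}\mathbb{K}$; hence the ``$\mathsf{Q}$'' attached to the pair $(\operatorname{gr}\mathbb{J},\operatorname{gr}\mathbb{K})$ is the same $\mathsf{Q}$, and applying part (1) to that pair gives a graded super-coalgebra isomorphism $\operatorname{gr}\mathbb{J}/\hspace{-1mm}/\operatorname{gr}\mathbb{K}\simeq J\otimes_{K}\wedge(\mathsf{Q})$. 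On the other hand, the filtration $F_{\bullet}$ is intrinsic to a super-coalgebra, so the part-(1) isomorphism for $\mathbb{J}/\hspace{-1mm}/\mathbb{K}$ carries $F_{\bullet}(\mathbb{J}/\hspace{-1mm}/\mathbb{K})$ to $F_{\bullet}(J\otimes_{K}\wedge(\mathsf{Q}))$; and since $J\otimes_{K}\wedge(\mathsf{Q})$ is co-free on $J\otimes_{K}\mathsf{Q}$ (Proposition \ref{P42}), a short computation with the co-free construction — identical to the one giving $F_{n}\wedge(\mathsf{V})=\bigoplus_{m\le n}\wedge^{m}(\mathsf{V})$, together with $F_{0}(\mathbb{J}/\hspace{-1mm}/\mathbb{K})=J/\hspace{-1mm}/K$ (Corollary \ref{C41}) — yields $F_{n}(J\otimes_{K}\wedge(\mathsf{Q}))=\bigoplus_{m\le n}(J\otimes_{K}\wedge(\mathsf{Q}))(m)$. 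Hence $\operatorname{gr}(\mathbb{J}/\hspace{-1mm}/\mathbb{K})\simeq\operatorname{gr}(J\otimes_{K}\wedge(\mathsf{Q}))=J\otimes_{K}\wedge(\mathsf{Q})$, a graded coalgebra being its own associated graded. Finally, the natural projection $\mathbb{J}\to\mathbb{J}/\hspace{-1mm}/\mathbb{K}$ is filtered, hence induces a graded super-coalgebra map $\operatorname{gr}\mathbb{J}\to\operatorname{gr}(\mathbb{J}/\hspace{-1mm}/\mathbb{K})$ annihilating $(\operatorname{gr}\mathbb{K})^{+}$ — this is the step where one must check that passing to $\operatorname{gr}$ commutes with $(-)/\hspace{-1mm}/\mathbb{K}$, i.e., $\operatorname{gr}(\mathbb{J}\mathbb{K}^{+})=(\operatorname{gr}\mathbb{J})(\operatorname{gr}\mathbb{K})^{+}$, which I would again settle by transporting everything to the co-free model via Proposition \ref{P41}; the resulting graded super-coalgebra map $\operatorname{gr}\mathbb{J}/\hspace{-1mm}/\operatorname{gr}\mathbb{K}\to\operatorname{gr}(\mathbb{J}/\hspace{-1mm}/\mathbb{K})$ is identical in degrees $0$ and $1$, hence coincides with the isomorphism just produced by co-freeness (Remark \ref{R32}). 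This gives \eqref{E46} and the last clause.

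I expect the main obstacles to be exactly these ``$\operatorname{gr}$ vs.\ quotient'' and ``left-$K$ vs.\ right-$K$'' bookkeeping issues; in both cases the remedy is to transport everything, via Propositions \ref{P41} and \ref{P42}, to the explicit co-free model $J\otimes_{K}\wedge(\mathsf{Q})$, where the pertinent filtration is visibly the grading and where uniqueness of co-free objects (Remark \ref{R32}) forces the maps to be what they should be.
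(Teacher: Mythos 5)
Your proof of part (1) is essentially the paper's: both apply $\otimes_{K}\mathbb{K}/\mathbb{K}^{+}$ to the isomorphism $\Gamma^{\mathbb{K}}$ of Proposition \ref{P41} and then verify the naturality in degrees $0$ and $1$ from the explicit form of $\tau$ and $\Gamma$ (the paper records exactly your observation that $\mathrm{product}\circ\tau(1)$ is the multiplication map $J\otimes\mathsf{V}_{\mathbb{J}}\to\mathbb{J}$). The identification of $(J\otimes\wedge(\mathsf{Q}))\otimes_{K}\Bbbk$, with its diagonal right $K$-action, with $J\otimes_{K}\wedge(\mathsf{Q})$, where $\wedge(\mathsf{Q})$ carries the left adjoint action, is left implicit in the paper's diagram; your explicit check via $v\triangleleft S(k)=k\triangleright v$ and cocommutativity fills that in correctly. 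For part (2) you take a genuinely different, and valid, route: the paper applies $\operatorname{gr}$ to the whole commutative square built from $\Gamma^{\mathbb{K}}$, reading off at once that $\operatorname{gr}\mathbb{K}\hookrightarrow\operatorname{gr}\mathbb{J}$ (from the top isomorphism $(J\otimes\wedge(\mathsf{Q}))\otimes_{K}\operatorname{gr}\mathbb{K}\simeq\operatorname{gr}\mathbb{J}$, which uses $\operatorname{gr}\mathbb{K}\simeq K\otimes\wedge(\mathsf{V}_{\mathbb{K}})$), that the right-hand projection induces \eqref{E46}, and that the bottom identifies everything with $J\otimes_{K}\wedge(\mathsf{Q})$. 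You instead prove $F_{n}\mathbb{K}=F_{n}\mathbb{J}\cap\mathbb{K}$ directly (correct, via $K=\mathbb{K}\cap J$ and injectivity of $(\mathbb{K}/K)^{\otimes(n+1)}\to(\mathbb{J}/J)^{\otimes(n+1)}$), apply part (1) to the pair $(\operatorname{gr}\mathbb{J},\operatorname{gr}\mathbb{K})$ — legitimate, since $(\operatorname{gr}\mathbb{J})(0)=J$ and $P(\operatorname{gr}\mathbb{J})_{1}=\mathsf{V}_{\mathbb{J}}$ give the same $\mathsf{Q}$ — compute that the coalgebra filtration of the graded co-free object is its grading filtration, and pin down \eqref{E46} by terminality (Remark \ref{R32}). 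What the paper's route buys is that the two harder-looking verifications you flag ($\operatorname{gr}$ versus quotient, and the surjection inducing \eqref{E46}) come for free from functoriality of $\operatorname{gr}$ applied to a single square; what yours buys is a self-contained explanation of why $\operatorname{gr}\mathbb{K}$ sits inside $\operatorname{gr}\mathbb{J}$ and an extra consistency check. Your appeal to Corollary \ref{C41}(1) inside part (2) is not circular, since that corollary only uses part (1). I see no gap.
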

\begin{proof}
(1) Recall from Proposition \ref{P41} the isomorphism $\Gamma^{\mathbb{K}}: (J \otimes \wedge(\mathsf{Q})) \otimes_{K} \mathbb{K} \overset{\simeq}{\longrightarrow} \mathbb{J}$, which has the property, among others, that it sends $(a \otimes 1_{\wedge(\mathsf{Q})}) \otimes_{K} 1_{\mathbb{K}}$ to $a$ for every $a \in J$.
With $\otimes_{K} \mathbb{K}/\mathbb{K}^{+}$ applied, it induces a desired isomorphism.
To verify the prescribed naturality, one uses the property above and the fact that the composite 
\[
J \otimes \mathsf{V}_{\mathbb{J}} \xrightarrow{\tau(1)} \mathsf{V}_{\mathbb{J}}\otimes J 
\xrightarrow{\mathrm{product}}\mathbb{J}
\]
of the first component $\tau(1)$ of the $\tau$ (see \eqref{E41b}) with the product in $\mathbb{J}$ coincides with the product map
$J \otimes \mathsf{V}_{\mathbb{J}}\to \mathbb{J}$, $a \otimes v\mapsto av$. 

(2) We have the following commutative diagram in which the bottom horizontal arrow indicates the isomorphism just obtained,
\[
\begin{xy}
(0,0)   *++{(J \otimes \wedge(\mathsf{Q})) \otimes_K \mathbb{K}}  ="1",
(40,0)  *++{\mathbb{J}}    ="2",
(0,-18) *++{J \otimes_K \wedge(\mathsf{Q})} ="3",
(40,-18)*++{\mathbb{J} /\hspace{-1mm}/\mathbb{K}.} ="4",
{"1" \SelectTips{cm}{} \ar @{->}_-{\Gamma^{\mathbb{K}}}^-{\simeq} "2"},
{"1" \SelectTips{cm}{} \ar @{->>}_{\otimes_{\mathbb{K}} \mathbb{K}/\mathbb{K}^{+}} "3"},
{"2" \SelectTips{cm}{} \ar @{->>}^-{\mathrm{projection}} "4"},
{"3" \SelectTips{cm}{} \ar @{->}^-{\simeq} "4"},
\end{xy}
\]
This, with $\operatorname{gr}$ applied, induces
\[
\begin{xy}
(0,0)   *++{(J \otimes \wedge(\mathsf{Q})) \otimes_K\operatorname{gr}\mathbb{K}}  ="1",
(40,0)  *++{\operatorname{gr}\mathbb{J}}    ="2",
(0,-18) *++{J \otimes_K \wedge(\mathsf{Q})} ="3",
(40,-18)*++{\operatorname{gr}(\mathbb{J} /\hspace{-1mm}/\mathbb{K}).} ="4",
{"1" \SelectTips{cm}{} \ar @{->}^-{\simeq} "2"},
{"1" \SelectTips{cm}{} \ar @{->>}_-{\otimes_{\operatorname{gr}\mathbb{K}} \operatorname{gr}\mathbb{K}/(\operatorname{gr}\mathbb{K})^{+}} "3"},
{"2" \SelectTips{cm}{} \ar @{->>} "4"},
{"3" \SelectTips{cm}{} \ar @{->}^-{\simeq} "4"},
\end{xy}
\]
Here one should notice 
\[ \operatorname{gr}((J \otimes \wedge(\mathsf{Q})) \otimes_{K} \mathbb{K}) = (J \otimes \wedge(\mathsf{Q})) \otimes_{K} \operatorname{gr}\mathbb{K}, \]
using an isomorphism $(\operatorname{gr}\mathbb{K} =)\, K \otimes \wedge(\mathsf{V}_{\mathbb{K}}) \overset{\simeq}{\longrightarrow} \mathbb{K}$ analogous to \eqref{E28}.
We see from the horizontal isomorphism at the top, which sends $(1_{J} \otimes 1_{\wedge(\mathsf{Q})}) \otimes_{K} 1_{\operatorname{gr}\mathbb{K}}$ to $1_{\operatorname{gr}\mathbb{J}}$, that $\operatorname{gr}\mathbb{K}$ is a graded Hopf sub-superalgebra of $\operatorname{gr}\mathbb{J}$; see also \cite[Remark 3.8]{M2}.
Moreover, the projection indicated by the vertical arrow on the RHS induces the isomorphism \eqref{E46}.
The horizontal isomorphism at the bottom proves the last statement.
\end{proof}

There is given in \cite[Proposition 3.14 (1)]{M2}
a description of $\mathbb{J}/\hspace{-1mm}/\mathbb{K}$ that looks similar to the one obtained 
in Theorem \ref{T41} (1) above.
But the cited result assumes that $\mathbb{J}$ is irreducible, and indeed, 
the down-to-earth proof given there is not 
valid in the present general case. 
The description of ours gives a simpler and more natural proof of the following result reproduced from \cite{M2}.

\begin{corollary}[\tu{\cite[Theorem 3.13 (2), (3)]{M2}}]\label{C41}
We have the following.
\begin{itemize}
\item[(1)]
$J/\hspace{-1mm}/K$ is naturally isomorphic to the largest purely even sub-super-coalgebra 
of $\mathbb{J}/\hspace{-1mm}/\mathbb{K}$.
\item[(2)]
$\mathsf{Q}$ is naturally isomorphic to the purely odd super-vector space
\[
P(\mathbb{J}/\hspace{-1mm}/\mathbb{K})_1 =\{\, z \in (\mathbb{J}/\hspace{-1mm}/\mathbb{K})_1 \mid 
\Delta_{\mathbb{J}/\hspace{-1mm}/\mathbb{K}}(z)=\overline{1}\otimes z + z \otimes \overline{1}\, \}
\]
of odd primitive elements of $\mathbb{J}/\hspace{-1mm}/\mathbb{K}$, where $\overline{1}=1_{\mathbb{J}}\operatorname{mod}(\mathbb{J}\mathbb{K}^+)$. 
\end{itemize}
\end{corollary}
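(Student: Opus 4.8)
The plan is to deduce Corollary \ref{C41} directly from the isomorphism of super-coalgebras
\[
\Gamma^{\flat}: J \otimes_{K} \wedge(\mathsf{Q}) \overset{\simeq}{\longrightarrow} \mathbb{J}/\hspace{-1mm}/\mathbb{K}
\]
furnished by Theorem \ref{T41} (1), together with the general structural facts about co-free $C$-super-coalgebras recorded in Section \ref{Sec3}. The key observation is that $J \otimes_{K}\wedge(\mathsf{Q})$ is a \emph{graded} coalgebra with neutral component $J/\hspace{-1mm}/K$ and first component $J \otimes_{K}\mathsf{Q}$ (see \eqref{E41c}), and by Proposition \ref{P42} it is the co-free $J/\hspace{-1mm}/K$-super-coalgebra on $J \otimes_{K}\mathsf{Q}$; so I may read off the two required descriptions from the remark made just after the proof of Proposition \ref{P31}, namely that \emph{the largest purely even sub-super-coalgebra of} $\operatorname{coF}^{C}(\mathsf{V})$ \emph{is} $C$.

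For part (1): I would first note that $J/\hspace{-1mm}/K$, being purely even and sitting inside $J \otimes_{K}\wedge(\mathsf{Q})$ as the degree-$0$ component, is certainly a purely even sub-super-coalgebra of $\mathbb{J}/\hspace{-1mm}/\mathbb{K}$ via $\Gamma^{\flat}$. That it is the \emph{largest} one is exactly the assertion, already recorded in Section \ref{Sec3}, that the largest purely even sub-super-coalgebra of a co-free $C$-super-coalgebra is $C$ itself — this follows from the description \eqref{E25} of the pullback $\Delta^{-1}(\mathbb{J}_0\otimes\mathbb{J}_0)$ applied to the explicit grading of $\operatorname{coF}^{C}(\mathsf{V})$, since any element of positive degree has a coproduct component landing outside $\mathbb{J}_0\otimes\mathbb{J}_0$. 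Transporting along $\Gamma^{\flat}$, which is a super-coalgebra isomorphism and hence carries largest purely even sub-super-coalgebra to largest purely even sub-super-coalgebra, gives the claim; the naturality assertion is read off from the naturality clause of Theorem \ref{T41} (1), which says the restriction of $\Gamma^{\flat}$ to degree $0$ is induced from the projection $\mathbb{J}\to\mathbb{J}/\hspace{-1mm}/\mathbb{K}$ restricted to $J$.

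For part (2): I would identify $P(\mathbb{J}/\hspace{-1mm}/\mathbb{K})_1$ with the odd primitives of $J\otimes_K\wedge(\mathsf{Q})$ via $\Gamma^{\flat}$, and then compute these using the grading. An odd primitive must be a sum of homogeneous pieces; the coproduct formula for a graded coalgebra (cf.\ the formula $\Delta(a)=\sum\delta_{i,n-i}(a)$ for $A(\mathsf{V})$ in Section \ref{Subsec2.3}, carried over to $J\otimes_K\wedge(\mathsf{Q})$ through Proposition \ref{P42}) forces the degree-$0$ part to equal $\overline{1}\in J/\hspace{-1mm}/K$ scaled appropriately and the higher-degree parts ($n\ge 2$) to vanish, leaving precisely the degree-$1$ component $J\otimes_K\mathsf{Q}$ — more precisely the elements $w$ of degree $1$ with $\delta_{1,0}(w)=w\otimes\overline 1$ and $\delta_{0,1}(w)=\overline 1\otimes w$, which by the co-module description \eqref{E34} of $(J\otimes_K\wedge(\mathsf{Q}))(1)$ over $J/\hspace{-1mm}/K$ picks out the $J/\hspace{-1mm}/K$-coinvariants of $J\otimes_K\mathsf{Q}$; for $\mathsf{Q}$ purely odd this coinvariant space is exactly $\mathsf{Q}$ (the generic fibre argument: base-extend to $\overline{\Bbbk}$, use $J\simeq J/\hspace{-1mm}/K\otimes K$ so that $J\otimes_K\mathsf{Q}\simeq J/\hspace{-1mm}/K\otimes\mathsf{Q}$ as $J/\hspace{-1mm}/K$-comodules, whose coinvariants are $1\otimes\mathsf{Q}$). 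Again the naturality in (2) comes from the naturality clause of Theorem \ref{T41} (1) applied to the degree-$1$ restriction $J\otimes\mathsf{V}_{\mathbb{J}}$.

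The main obstacle I anticipate is bookkeeping rather than conceptual: one must be careful that ``largest purely even sub-super-coalgebra'' and ``odd primitives'' are both functorial in a way that an isomorphism of super-coalgebras respects (trivial), but also that the explicit grading-based computation of primitives of $J\otimes_K\wedge(\mathsf{Q})$ genuinely uses only the graded-coalgebra structure and not any ring structure — which is fine here because $\mathbb{J}/\hspace{-1mm}/\mathbb{K}$ carries no algebra structure when $\mathbb{K}$ is not normal, and indeed Theorem \ref{T41} only provides a coalgebra isomorphism. A secondary point requiring a line of care is that in part (2) one should check the coinvariants of $J\otimes_K\mathsf{Q}$ over $J/\hspace{-1mm}/K$ really reduce to $\mathsf{Q}$ without extra contribution; the cleanest route is the base-change-to-$\overline\Bbbk$ reduction already used in the proof of Propositions \ref{P42} and \ref{P43}, after which $J$ is pointed and \cite[Theorem 1.3 (4)]{M} trivialises the comodule. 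Everything else is direct transport of structure along $\Gamma^{\flat}$.
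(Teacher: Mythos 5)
Your proposal is correct and takes essentially the same route as the paper: both parts are deduced by transporting structure along the isomorphism of Theorem \ref{T41} (1) and its naturality clause, using for (1) the co-freeness of $J\otimes_K\wedge(\mathsf{Q})$ (Proposition \ref{P42}) together with the remark that the largest purely even sub-super-coalgebra of $\operatorname{coF}^{C}(\mathsf{V})$ is $C$, and for (2) the base extension to $\overline{\Bbbk}$ and the trivialisation $J\simeq J/\hspace{-1mm}/K\otimes K$ giving $J\otimes_K\mathsf{Q}\simeq J/\hspace{-1mm}/K\otimes\mathsf{Q}$. The only slip is your phrase that the degree-$0$ part of a primitive equals ``$\overline{1}$ scaled appropriately'': since degree $0$ is even and you are computing \emph{odd} primitives, that component simply vanishes, which is all the computation needs.
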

\begin{proof}
(1) 
Notice from Proposition \ref{P42} and the remark preceding Example \ref{E31} that 
$J /\hspace{-1mm}/ K\, (=J\otimes_K \wedge^0(\mathsf{Q}))$ is the largest purely even sub-super-coalgebra of $J\otimes_K\wedge(\mathsf{Q})$. Then the desired result follows from 
the isomorphism proved by Theorem 4.5 (1) and its partial naturality on $J /\hspace{-1mm}/ K$.

(2)
Notice that taking $P(\ )$ is compatible with base extension. Then by using the isomorphism
\eqref{E42} after base extension to the algebraic closure of $\Bbbk$, we see
\[ P(J\otimes_K\wedge(\mathsf{Q}))_1= K\otimes_K \mathsf{Q}\, (=\mathsf{Q}). \]
The desired result follows from 
the isomorphism proved by Theorem 4.5 (1) and its partial naturality on 
$J \otimes_K \mathsf{Q}\, (=J\otimes_K \wedge^1(\mathsf{Q}))$. 
\end{proof}

As another advantage (from \cite{M2}) of our more conceptual treatment 
that uses the notion of co-free super-coalgebras, 
we have the following smoothness criteria for 
$\mathbb{J}/\hspace{-1mm}/\mathbb{K}$.

\begin{theorem}\label{T42}
We have the following.
\begin{itemize}
\item[(1)]
$\mathbb{J}/\hspace{-1mm}/\mathbb{K}$ is smooth if and only if so is $J/\hspace{-1mm}/K$. 
\item[(2)]
$\mathbb{J}/\hspace{-1mm}/\mathbb{K}$ is smooth if $\operatorname{char} \Bbbk=0$.
\end{itemize}
\end{theorem}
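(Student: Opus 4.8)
The plan is to deduce both parts from the isomorphism $J\otimes_K\wedge(\mathsf{Q})\simeq\mathbb{J}/\hspace{-1mm}/\mathbb{K}$ of super-coalgebras given by Theorem \ref{T41} (1), together with the fact, recorded in Proposition \ref{P42}, that this object is the co-free $J/\hspace{-1mm}/K$-super-coalgebra on the $(J/\hspace{-1mm}/K)$-super-comodule $J\otimes_K\mathsf{Q}$. The statement is really a statement about $\operatorname{coF}^{J/\hspace{-1mm}/K}(J\otimes_K\mathsf{Q})$, and we have at hand two results tailored to this: Proposition \ref{P32} (co-freeness of a smooth base coalgebra on an injective comodule is smooth) and Remark \ref{R33} (the largest purely even sub-super-coalgebra of a smooth super-coalgebra is smooth).

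For part (1), the `only if' direction is immediate: by Theorem \ref{T41} (1), $J/\hspace{-1mm}/K$ is (isomorphic to) the largest purely even sub-super-coalgebra of $\mathbb{J}/\hspace{-1mm}/\mathbb{K}$ (this is also Corollary \ref{C41} (1)), so Remark \ref{R33} applies directly. For the `if' direction, suppose $J/\hspace{-1mm}/K$ is smooth as a coalgebra. I want to invoke Proposition \ref{P32} with $C=J/\hspace{-1mm}/K$ and $\mathsf{V}=J\otimes_K\mathsf{Q}$, so the remaining point is to check that $J\otimes_K\mathsf{Q}$ is an injective purely odd $(J/\hspace{-1mm}/K)$-super-comodule. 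Here $J\otimes_K\mathsf{Q}$ carries the $(J/\hspace{-1mm}/K)$-coaction coming only from the tensor factor $J$, i.e. it is essentially the cofree comodule $(J/\hspace{-1mm}/K)\otimes(\text{something})$ up to the $K$-module structure being quotiented out. The clean way to see injectivity: after base extension to $\overline{\Bbbk}$ (which is harmless for smoothness and for injectivity of comodules, and does not change whether the relevant maps split, as in the proof of Proposition \ref{P43}), $J$ is pointed and \cite[Theorem 1.3 (4)]{M} gives a $(J/\hspace{-1mm}/K)$-colinear right $K$-linear isomorphism $J\simeq J/\hspace{-1mm}/K\otimes K$; applying $\otimes_K\mathsf{Q}$ yields $J\otimes_K\mathsf{Q}\simeq (J/\hspace{-1mm}/K)\otimes\mathsf{Q}$ as $(J/\hspace{-1mm}/K)$-comodules, and $(J/\hspace{-1mm}/K)\otimes(\text{vector space})$ is an injective (indeed cofree) comodule. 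Hence Proposition \ref{P32} gives that $\operatorname{coF}^{J/\hspace{-1mm}/K}(J\otimes_K\mathsf{Q})\simeq\mathbb{J}/\hspace{-1mm}/\mathbb{K}$ is smooth.

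For part (2), when $\operatorname{char}\Bbbk=0$ it suffices, by part (1), to show $J/\hspace{-1mm}/K$ is smooth as a coalgebra. As noted in the paragraph preceding Proposition \ref{P32}, in characteristic zero every (cocommutative) Hopf algebra is smooth; but $J/\hspace{-1mm}/K$ is only a quotient coalgebra of the Hopf algebra $J$, not a Hopf algebra in general. So I would instead argue as follows: in characteristic zero, $K$ is smooth as a coalgebra (being a Hopf algebra), so condition (iv)(a) of Proposition \ref{P43} holds; if additionally (iv)(b) held we would get $J\simeq J/\hspace{-1mm}/K\otimes K$ as right $K$-modules and could try to transfer smoothness, but (iv)(b) need not hold in general. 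The better route is to observe directly that $\mathbb{J}/\hspace{-1mm}/\mathbb{K}$ is a quotient super-coalgebra of $\mathbb{J}$, and $\mathbb{J}$ is smooth in characteristic zero — indeed, via the isomorphism $\phi_X\colon J\otimes\wedge(\mathsf{V}_{\mathbb{J}})\simeq\mathbb{J}$ of super-coalgebras from \eqref{E28}, smoothness of $\mathbb{J}$ reduces to smoothness of the Hopf algebra $J$ (smooth in $\operatorname{char}\Bbbk=0$) and of $\wedge(\mathsf{V}_{\mathbb{J}})=\operatorname{coF}^\Bbbk(\mathsf{V}_{\mathbb{J}})$ (smooth by Proposition \ref{P32}, $\Bbbk$ being smooth and any purely odd vector space being an injective $\Bbbk$-comodule), using that smoothness is inherited by cofree-type tensor constructions. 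Then, again via Theorem \ref{T41} (1), $\mathbb{J}/\hspace{-1mm}/\mathbb{K}\simeq\operatorname{coF}^{J/\hspace{-1mm}/K}(J\otimes_K\mathsf{Q})$, and by Remark \ref{R33} its largest purely even part $J/\hspace{-1mm}/K$ is smooth — wait, that is circular for part (2). So the honest argument for (2) is: reduce to showing $J/\hspace{-1mm}/K$ is smooth, and prove the latter by the fact that over a base field of characteristic zero $J$ is cosemisimple-by-unipotent in a way that makes every quotient coalgebra $J\to J/\hspace{-1mm}/K$ admit the needed extension property; concretely, one uses that $J$ decomposes (after base extension) via \cite[Theorem 1.3]{M} so that $J/\hspace{-1mm}/K$ is a retract, as a coalgebra, of $J$ whenever $K\supset\operatorname{Corad}J$, and in general one replaces $K$ by $K\cdot\operatorname{Corad}J$ and runs a limit/coradical-filtration argument.

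I expect the main obstacle to be precisely this last point in part (2): showing $J/\hspace{-1mm}/K$ is a smooth coalgebra in characteristic zero without a splitting of $K\hookrightarrow J$ available in general. The cleanest resolution is probably to note that $J$ itself, being a cocommutative Hopf algebra over a characteristic-zero field, has the property that every essential inclusion of any of its quotient coalgebras splits — equivalently, the quotient coalgebras of $J$ inherit smoothness — which can be seen by combining the structure $J=\Bbbk G(\overline J)\ltimes U(\mathfrak g)$ after base extension with the known smoothness of group algebras and enveloping algebras, and descending. If that inheritance statement is too delicate to quote, the fallback is to cite that in characteristic zero $J/\hspace{-1mm}/K$ is actually the coordinate coalgebra of a formal homogeneous space which is known to be smooth, but I would prefer the purely Hopf-algebraic argument. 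Everything else — the base-change reductions, the identification of $\mathbb{J}/\hspace{-1mm}/\mathbb{K}$ with a co-free super-coalgebra, and the applications of Proposition \ref{P32} and Remark \ref{R33} — is routine given the machinery already developed.
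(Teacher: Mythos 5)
Your part (1) is essentially the paper's argument: identify $\mathbb{J}/\hspace{-1mm}/\mathbb{K}$ with $\operatorname{coF}^{J/\hspace{-1mm}/K}(J\otimes_K\mathsf{Q})$ via Theorem \ref{T41}, get ``only if'' from Remark \ref{R33}, and get ``if'' from Proposition \ref{P32} once $J\otimes_K\mathsf{Q}$ is known to be an injective $J/\hspace{-1mm}/K$-comodule. One small imprecision: injectivity of a comodule is not obviously detected after base extension to $\overline{\Bbbk}$, which is why the paper first replaces injectivity by co-flatness (via \cite[Proposition A.2.1]{T0}) --- exactness of the cotensor functor \emph{is} checkable after a faithfully flat field extension, where the comodule becomes the co-free one $J/\hspace{-1mm}/K\otimes\mathsf{Q}$. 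You should route your argument through co-flatness rather than asserting base change is ``harmless for injectivity.''

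Part (2) has a genuine gap. You correctly isolate the crux --- $J/\hspace{-1mm}/K$ is only a quotient coalgebra of $J$, not a Hopf algebra, so the standard ``Hopf algebras are smooth in characteristic $0$'' does not apply --- but none of the routes you sketch actually closes it. The claim you lean on, that ``quotient coalgebras of $J$ inherit smoothness,'' is not a general principle and is exactly what needs proof; a splitting $J\simeq J/\hspace{-1mm}/K\otimes K$ is only available under hypotheses like (iv)(b) of Proposition \ref{P43}, as you yourself note, and the proposed ``replace $K$ by $K\cdot\operatorname{Corad}J$ and run a limit argument'' is not carried out. The paper's actual proof is concrete: reduce to $\Bbbk=\overline{\Bbbk}$ using \cite[Proposition 1.5]{FS}, write $J=\Bbbk G_J\ltimes U(P(J))$ and $K=\Bbbk G_K\ltimes U(P(K))$, and compute
\[
J/\hspace{-1mm}/K\;\simeq\;\Bbbk(G_J/G_K)\otimes\bigl(U(P(J))/\hspace{-1mm}/U(P(K))\bigr)
\]
as coalgebras. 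The first factor is spanned by grouplikes, hence smooth; the second is, by Poincar\'e--Birkhoff--Witt in characteristic $0$, a pointed irreducible coalgebra of Birkhoff--Witt type (a tensor product of divided-power coalgebras of infinite length built from a basis of $P(J)/P(K)$), hence smooth. It is precisely this identification of $U(P(J))/\hspace{-1mm}/U(P(K))$ as a divided-power coalgebra --- not the smoothness of $U(P(J))$ itself --- that does the work, and it is absent from your proposal. Your fallback of citing smoothness of the formal homogeneous space is circular, as that is what is being proved.
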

\begin{proof}
(1)\
By Theorem \ref{T41} we may suppose 
$\mathbb{J}/\hspace{-1mm}/\mathbb{K}
=
\operatorname{coF}^{J/\hspace{-1mm}/K}(J\otimes_K\mathsf{Q})$. 
We claim that the $J/\hspace{-1mm}/K$-comodule
$J\otimes_K\mathsf{Q}$ is injective, or equivalently,
co-flat; see \cite[Proposition A.2.1]{T0}. Indeed, the co-flatness (that is,
the right exactness of the associated co-tensor product
functor) follows, since the comodule turns into the co-free
comodule $J/\hspace{-1mm}/K\otimes \mathsf{Q}$ 
after base extension to the algebraic closure of $\Bbbk$;
see the proof of Proposition \ref{P42}. 
The claim, combined with
Proposition \ref{P32} and Remark \ref{R33},
proves the desired result. 

(2)\
By Part 1 it suffices to prove that
the coalgebra $J/\hspace{-1mm}/K$ is smooth in case
$\operatorname{char} \Bbbk=0$.
In view of \cite[Proposition 1.5]{FS} we may suppose by base extension such as above
that $\Bbbk$ is algebraically closed, in which case $J$ and $K$ are smash products 
(see Section \ref{Subsec2.5})
\[
J=\Bbbk G_J\ltimes U_J,\quad  K=\Bbbk G_K\ltimes U_K,
\]
where we have set
\[
G_L=G(L),\quad G_K=G(K),\quad U_J=U(P(L)),\quad U_K=U(P(K)). 
\]
 We see
\[
J/\hspace{-1mm}/K\simeq\Bbbk(G_L/G_K)\otimes 
U_L/\hspace{-1mm}/U_K
\]
as coalgebras. 
It remains to prove that $\Bbbk(G_L/G_K)$ and $U_L/\hspace{-1mm}/U_K$ are both smooth coalgebras. 
First, $\Bbbk(G_L/G_K)$, being spanned by
the grouplike elements of all right cosets of $G_L$ by $G_K$, is a smooth coalgebra. So is 
$U_L/\hspace{-1mm}/U_K$, since it is a
a pointed irreducible coalgebra of Birchkoff-Witt type, 
or more explicitly, it is isomorphic to the tensor product 
\[
U_L/\hspace{-1mm}/U_K\simeq \bigotimes_{\lambda \in \Lambda} C_{\lambda}
\]
of coalgebras $C_{\lambda}$, each of which is spanned by 
the divided power sequence of infinite length
\[
1, x_{\lambda}, \frac{1}{2}x_{\lambda}^2,\ \frac{1}{3!}x_{\lambda}^3,\ \cdots,\ \frac{1}{n!}x_{\lambda}^n,\ \cdots
\]
in $U_L$, where $(x_{\lambda})_{\lambda\in \Lambda}$ are arbitrarily chosen elements of $P(L)$ 
such that they modulo $P(K)$ form
a $\Bbbk$-basis of $P(L)/P(K)$. 
\end{proof}

\appendix
\section{On formal superschemes}
Let us translate some of the results obtained in the text into the language of formal superschemes. 
The authors do not know any literature which discusses in detail theory of formal schemes over
a field such as developed by \cite{T0}, in the generalized super context. 
But, at least basic definitions and results found in part of \cite{T0} are directly generalized as will be seen below. 
We continue to suppose $\operatorname{char} \Bbbk \ne 2$ and that super-coalgebras and Hopf superalgebras are super-cocommutative.

A set-valued (resp., group-valued) functor defined on 
the category $\mathsf{SAlg}_{\Bbbk}$ of super-commutative
superalgebras
is called a \emph{super $\Bbbk$-functor} (resp., \emph{super $\Bbbk$-group}); see \cite[Sect. 3]{MZ},
for example.

Let $\mathbb{C}$ be a super-coalgebra (over $\Bbbk$). 
Given an object $R\in \mathsf{SAlg}_{\Bbbk}$, $R\otimes \mathbb{C}$ is a super-coalgebra over $R$. Let
\[
G_R(R \otimes \mathbb{C})=\{\, g \in (R\otimes\mathbb{C})_0\mid \Delta(g)=g\otimes_R g,\ \varepsilon(g)=1_R\, \}
\]
denote the set of all even grouplike elements of $R\otimes \mathbb{C}$. This is naturally 
a group if $\mathbb{C}$ is a Hopf
superalgebra. 
One sees that $R \mapsto G_R(R \otimes \mathbb{C})$ gives rise to a functor, 
$\operatorname{Sp}^*(\mathbb{C})$,
which is called the \emph{formal superscheme} or \emph{formal supergroup} corresponding to $\mathbb{C}$. 
The formal superscheme $\operatorname{Sp}^*(\mathbb{C})$ is said to be \emph{smooth} if $\mathbb{C}$ is a
smooth super-coalgebra. 
Every formal supergroup is smooth if $\operatorname{char}\Bbbk=0$. 
See the paragraphs following Remark \ref{R32}.

The assignment $\mathbb{C}\mapsto \operatorname{Sp}^*(\mathbb{C})$ gives rise to a category equivalence
from the category of super-coalgebras (resp., Hopf superalgebras) to that full subcategory of the category of
super $\Bbbk$-functors (resp., super $\Bbbk$-groups) which consists
of all formal superschemes (resp., formal supergroups). The source category has (possibly, infinite) 
direct products given by the
tensor product $\otimes$, and the target full subcategory is closed under the direct product. 
By this fact on the category of super-coalgebras (resp., formal superschemes) one can define group objects
in the respective categories, which are precisely Hopf superalgebras (resp., formal supergroups); see \cite[Appendix B]{DNR}, for example. 

We remark that
the category of formal superschemes consists precisely 
of those super $\Bbbk$-functors $\mathfrak{X}$ such that
$\mathfrak{X}$ is isomorphic to the inductive limit associated to some filtered inductive system 
$(\mathfrak{X}_{\lambda}, \mathfrak{u}_{\lambda\mu})$ of finite affine superschemes, where each $\mathfrak{X}_{\lambda}$ is 
thus represented
by a finite-dimensional super-commutative superalgebra, say, $A_{\lambda}$, and the morphism 
$\mathfrak{u}_{\lambda\mu} : \mathfrak{X}_{\lambda}\to \mathfrak{X}_{\mu}$ for $\lambda <\mu$ is supposed to be a
closed immersion, or namely, to arise from a surjective superalgebra map $A_{\mu}\to A_{\lambda}$. 
It follows that the category is included in the category of sheaves in the fppf topology (faisceaux).
Also, a monomorphism 
$\operatorname{Sp}^*(\mathbb{C})\to \operatorname{Sp}^*(\mathbb{D})$ of formal superschemes uniquely
arises from an injective super-coalgebra map $\mathbb{C}\to \mathbb{D}$. Therefore, we may and we do 
call $\operatorname{Sp}^*(\mathbb{C})$ as a \emph{formal sub-superscheme} (resp., \emph{formal sub-supergroup}) of $\operatorname{Sp}^*(\mathbb{D})$ if $\mathbb{C}$ is a sub-super-coalgebra 
(resp., Hopf sub-superalgebra) of a super-coalgebra (resp., Hopf superalgebra) $\mathbb{D}$. 

Given a super $\Bbbk$-functor $\mathfrak{X}$,  we have a super $\Bbbk$-functor, $\mathfrak{X}_{\mathrm{ev}}$,
defined by
\[
\mathfrak{X}_{\mathrm{ev}}(R):=\mathfrak{X}(R_0),\quad R\in \mathsf{SAlg}_{\Bbbk}.
\] 
If $\mathfrak{X}$ is a formal superscheme or a formal supergroup, so is $\mathfrak{X}_{\mathrm{ev}}$. For if 
$\mathfrak{X}=\operatorname{Sp}^*(\mathbb{C})$, then $\mathfrak{X}_{\mathrm{ev}}=\operatorname{Sp}^*(C)$,
where $C$ is the largest purely even sub-super-coalgebra of $\mathbb{C}$; see \eqref{E25}. 
In this case, $\mathfrak{X}_{\mathrm{ev}}$ can be regarded as an ordinary formal scheme or group. 
In general, every
(ordinary) formal scheme, say, $\operatorname{Sp}^*(D)$ is regarded as a formal superscheme, with $D$
regarded as a (purely even) super-coalgebra. It is smooth at the same time as a formal scheme
and as a formal superscheme. 

Let
$
\mathfrak{G}=\operatorname{Sp}^*(\mathbb{J})
$
be a formal supergroup corresponding to a Hopf superalgebra $\mathbb{J}$. As in \eqref{E26a}, let $\mathsf{V}_{\mathbb{J}}=P(\mathbb{J})_1$
be the purely odd super-vector space of odd primitive elements of $\mathbb{J}$. One sees that 
$\mathfrak{G}_{\operatorname{ev}}$ is a formal sub-supergroup of $\mathfrak{G}$. 
The following results from the isomorphism $\phi_X$ in \eqref{E28}. 

\begin{prop}\label{PA1}
We have a $\mathfrak{G}_{\operatorname{ev}}$-equivariant isomorphism 
\[
\mathfrak{G}_{\operatorname{ev}}\times \operatorname{Sp}^*(\wedge(\mathsf{V}_{\mathbb{J}}))\simeq \mathfrak{G}
\]
of formal superschemes. 
\end{prop}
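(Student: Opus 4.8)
The plan is to obtain the statement by transporting, via the category equivalence $\mathbb{C}\mapsto\operatorname{Sp}^*(\mathbb{C})$, the left $J$-linear super-coalgebra isomorphism $\phi_X\colon J\otimes\wedge(\mathsf{V}_{\mathbb{J}})\xrightarrow{\simeq}\mathbb{J}$ of \eqref{E28}. First I would record that $\operatorname{Sp}^*$ carries direct products to direct products: the equivalence identifies the tensor product of super-coalgebras with the product of formal superschemes, and since the formal superschemes form a full subcategory of the super $\Bbbk$-functors that is closed under products, this product is computed objectwise, so that on $R$-points the grouplike elements of $R\otimes\mathbb{C}\otimes\mathbb{D}$ are naturally the pairs of grouplike elements of $R\otimes\mathbb{C}$ and of $R\otimes\mathbb{D}$. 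Since $J$ is the largest purely even sub-super-coalgebra of $\mathbb{J}$ we have $\operatorname{Sp}^*(J)=\mathfrak{G}_{\operatorname{ev}}$ by definition, so applying $\operatorname{Sp}^*$ to $\phi_X$ already produces an isomorphism $\mathfrak{G}_{\operatorname{ev}}\times\operatorname{Sp}^*(\wedge(\mathsf{V}_{\mathbb{J}}))\xrightarrow{\simeq}\mathfrak{G}$ of formal superschemes.

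For the equivariance I would use that the algebra multiplication of $\mathbb{J}$ makes it a left $J$-module super-coalgebra, i.e. $J\otimes\mathbb{J}\to\mathbb{J}$ is a super-coalgebra map satisfying the module axioms. Applying the product-preserving functor $\operatorname{Sp}^*$ turns those axioms into the axioms for a group action, so one gets a left action $\mathfrak{G}_{\operatorname{ev}}\times\mathfrak{G}\to\mathfrak{G}$; unwinding the functor-of-points description shows that on a pair $(h,g)$ of grouplike elements it returns the product $hg$, i.e. it is left translation by the sub-supergroup $\mathfrak{G}_{\operatorname{ev}}$. Likewise the multiplication on the first tensor factor makes $J\otimes\wedge(\mathsf{V}_{\mathbb{J}})$ a left $J$-module super-coalgebra, and $\operatorname{Sp}^*$ sends it to left translation on the $\mathfrak{G}_{\operatorname{ev}}$-factor of the product. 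Because $\phi_X$ is left $J$-linear for exactly these two module structures, $\operatorname{Sp}^*(\phi_X)$ intertwines the two actions, which is the asserted $\mathfrak{G}_{\operatorname{ev}}$-equivariance.

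I expect no essential difficulty here; the two points that require genuine care are (a) the verification that $\operatorname{Sp}^*$ really respects the (possibly infinite-dimensional) tensor products in play, so that $G_R(R\otimes J\otimes\wedge(\mathsf{V}_{\mathbb{J}}))$ decomposes naturally as $\mathfrak{G}_{\operatorname{ev}}(R)\times\operatorname{Sp}^*(\wedge(\mathsf{V}_{\mathbb{J}}))(R)$, and (b) tracing the definition of $\operatorname{Sp}^*$ through the multiplication map $J\otimes\mathbb{J}\to\mathbb{J}$ far enough to recognize the induced action as honest left translation rather than a twist of it. A route that bypasses most of this bookkeeping is to define the isomorphism directly on $R$-points by $(h,\xi)\mapsto(\operatorname{id}_R\otimes\phi_X)(h\otimes\xi)$ — the image being grouplike because $h$ and $\xi$ are and $\phi_X$ is a coalgebra map — and then to read off equivariance from the identity $\phi_X(hh'\otimes u)=h\,\phi_X(h'\otimes u)$; but this ultimately repackages the same product-preservation fact, so the two approaches are of comparable length.
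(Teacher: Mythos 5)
Your proposal is correct and follows essentially the same route as the paper, which simply observes that the statement ``results from the isomorphism $\phi_X$ in \eqref{E28}'': one applies the category equivalence $\mathbb{C}\mapsto\operatorname{Sp}^*(\mathbb{C})$, under which $\otimes$ becomes the direct product and a left $J$-module super-coalgebra becomes a left formal $\mathfrak{G}_{\operatorname{ev}}$-superscheme, so that the left $J$-linear super-coalgebra isomorphism $\phi_X$ transports to the asserted equivariant isomorphism. Your two points of care (product-preservation of $\operatorname{Sp}^*$ and identifying the induced action as left translation) are exactly the facts the paper records in the surrounding paragraphs of the Appendix, so nothing is missing.
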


A group-equivariant object in the category of super-coalgebras is a coalgebra in the monoidal 
category ${}_{\mathbb{J}}\mathsf{SMod}$ (or $\mathsf{SMod}_{\mathbb{J}}$) of left (or right, according to the side on which $\mathbb{J}$ acts)
supermodules. On the other hand, such an object in the category of formal superschemes is called a 
\emph{left} or \emph{right formal $\mathfrak{G}$-superscheme}. Obviously, those objects in the respective categories 
are in a category equivalence. In Proposition \ref{PA1} above, $\mathfrak{G}$ is regarded as a left 
formal $\mathfrak{G}_{\operatorname{ev}}$-superscheme by multiplication. 

Retain
$\mathfrak{G}=\operatorname{Sp}^*(\mathbb{J})$ 
as above, and let $\mathfrak{H}=\operatorname{Sp}^*(\mathbb{K})$
be a formal sub-supergroup of $\mathfrak{G}$; 
thus, $\mathbb{K}$ is a Hopf sub-superalgebra of $\mathbb{J}$. 
Then for every $R \in \mathsf{SAlg}_{\Bbbk}$,
$\mathfrak{H}(R)$ is a subgroup of $\mathfrak{G}(R)$. It holds that $\mathfrak{H}$ is \emph{normal}, 
that is, for every $R$, $\mathfrak{H}(R)$ is a normal subgroup of $\mathfrak{G}(R)$, if and only if $\mathbb{K}$ is 
a normal Hopf sub-superalgebra of $\mathbb{J}$. 
Define
\[
\mathfrak{G}/\mathfrak{H}:=\operatorname{Sp}^*(\mathbb{J}/\hspace{-1mm}/\mathbb{K}), 
\]
where $\mathbb{J}/\hspace{-1mm}/\mathbb{K}$ is as in \eqref{E40a}; this is a formal supergroup
in case $\mathfrak{H}$ is normal. Then we have the co-equalizer diagram
\[
\mathfrak{G}\times \mathfrak{H}\rightrightarrows\mathfrak{G}\to \mathfrak{G}/\mathfrak{H}
\]
in the category of formal superschemes (formal supergroups, in case $\mathfrak{H}$ is normal), 
where the paired arrows indicate the multiplication and the projection. Main interest of ours is in the case where
$\mathfrak{H}$ is not normal in $\mathfrak{G}$. 

In case $\mathfrak{G}$ and hence $\mathfrak{H}$ are formal groups, the formal superscheme 
arising from the quotient formal scheme $\mathfrak{G}/\mathfrak{H}$ coincides with the 
quotient formal superscheme which one constructs, 
regarding $\mathfrak{G}$ and $\mathfrak{H}$ as formal supergroups. 

\begin{theorem}\label{TA1}
We have the following.
\begin{itemize}
\item[(1)] The morphism $\mathfrak{G}_{\operatorname{ev}}\to (\mathfrak{G}/\mathfrak{H})_{\operatorname{ev}}$
arising from the quotient $\mathfrak{G}\to \mathfrak{G}/\mathfrak{H}$ induces an isomorphism
\[
\mathfrak{G}_{\operatorname{ev}}/\mathfrak{H}_{\operatorname{ev}}
\overset{\simeq}{\longrightarrow}
 (\mathfrak{G}/\mathfrak{H})_{\operatorname{ev}}
\]
of formal schemes.
\item[(2)]
$\mathfrak{G}/\mathfrak{H}$ is smooth if and only if so is $\mathfrak{G}_{\operatorname{ev}}/\mathfrak{H}_{\operatorname{ev}}$.
\item[(3)]
$\mathfrak{G}/\mathfrak{H}$ is smooth if $\operatorname{char} \Bbbk=0$.  
\end{itemize}
\end{theorem}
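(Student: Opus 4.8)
The plan is to obtain all three assertions by transporting the corresponding results of the text — Corollary \ref{C41}(1) for Part (1), and Theorem \ref{T42}(1),(2) for Parts (2),(3) — through the category equivalence $\mathbb{C}\mapsto\operatorname{Sp}^*(\mathbb{C})$, together with the identity $\mathfrak{X}_{\mathrm{ev}}=\operatorname{Sp}^*(C)$ recorded above, where $C$ denotes the largest purely even sub-super-coalgebra of $\mathbb{C}$.

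For Part (1), I would first unwind the two sides. Since $\mathfrak{G}/\mathfrak{H}=\operatorname{Sp}^*(\mathbb{J}/\hspace{-1mm}/\mathbb{K})$, we have $(\mathfrak{G}/\mathfrak{H})_{\mathrm{ev}}=\operatorname{Sp}^*(D)$, where $D$ is the largest purely even sub-super-coalgebra of $\mathbb{J}/\hspace{-1mm}/\mathbb{K}$; on the other hand $\mathfrak{G}_{\mathrm{ev}}=\operatorname{Sp}^*(J)$, $\mathfrak{H}_{\mathrm{ev}}=\operatorname{Sp}^*(K)$, so the quotient formal scheme $\mathfrak{G}_{\mathrm{ev}}/\mathfrak{H}_{\mathrm{ev}}$ is $\operatorname{Sp}^*(J/\hspace{-1mm}/K)$ — here one uses that, by the remark immediately preceding the theorem, this quotient computed in formal schemes coincides with the one computed in formal superschemes, $J/\hspace{-1mm}/K$ being purely even. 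Now Corollary \ref{C41}(1) supplies a natural isomorphism $J/\hspace{-1mm}/K\xrightarrow{\simeq}D$, natural precisely in that it is induced by the projection $\mathbb{J}\to\mathbb{J}/\hspace{-1mm}/\mathbb{K}$ restricted to $J$. Applying $\operatorname{Sp}^*$ yields the isomorphism $\mathfrak{G}_{\mathrm{ev}}/\mathfrak{H}_{\mathrm{ev}}\xrightarrow{\simeq}(\mathfrak{G}/\mathfrak{H})_{\mathrm{ev}}$, and the naturality clause is exactly what identifies it with the morphism induced by $\mathfrak{G}_{\mathrm{ev}}\to(\mathfrak{G}/\mathfrak{H})_{\mathrm{ev}}$: dually, $\mathbb{J}\to\mathbb{J}/\hspace{-1mm}/\mathbb{K}$ restricted to $J$ factors through $J\to J/\hspace{-1mm}/K$, which on the functor side gives the factorization of $\mathfrak{G}_{\mathrm{ev}}\to(\mathfrak{G}/\mathfrak{H})_{\mathrm{ev}}$ through $\mathfrak{G}_{\mathrm{ev}}/\mathfrak{H}_{\mathrm{ev}}$.

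For Part (2), the definition of smoothness of formal superschemes given in the Appendix says that $\mathfrak{G}/\mathfrak{H}$ is smooth iff $\mathbb{J}/\hspace{-1mm}/\mathbb{K}$ is a smooth super-coalgebra, while $\mathfrak{G}_{\mathrm{ev}}/\mathfrak{H}_{\mathrm{ev}}=\operatorname{Sp}^*(J/\hspace{-1mm}/K)$ is smooth iff $J/\hspace{-1mm}/K$ is smooth as a coalgebra — equivalently, by the paragraph following Remark \ref{R32}, as a super-coalgebra. Theorem \ref{T42}(1) is precisely the equivalence of these two conditions, and Part (3) is the translation, in the same manner, of Theorem \ref{T42}(2).

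The argument is essentially bookkeeping, and the one place demanding care is Part (1): one must verify that the abstract isomorphism of Corollary \ref{C41}(1) is compatible with the canonical projections, so that on the functor side one genuinely factors $\mathfrak{G}_{\mathrm{ev}}\to(\mathfrak{G}/\mathfrak{H})_{\mathrm{ev}}$ through an isomorphism out of $\mathfrak{G}_{\mathrm{ev}}/\mathfrak{H}_{\mathrm{ev}}$, rather than merely exhibiting an abstract isomorphism between the two target formal schemes — this is handled by the naturality already built into Corollary \ref{C41}(1). A secondary, minor point is the compatibility of forming $\mathfrak{G}_{\mathrm{ev}}/\mathfrak{H}_{\mathrm{ev}}$ in formal schemes versus in formal superschemes, which is exactly the content of the remark immediately before the theorem.
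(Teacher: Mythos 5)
Your proposal is correct and follows exactly the paper's route: the authors dispose of Theorem \ref{TA1} in one line, stating that Part (1) is a restatement of Corollary \ref{C41}(1) and Parts (2)--(3) of Theorem \ref{T42}, transported through the equivalence $\mathbb{C}\mapsto\operatorname{Sp}^*(\mathbb{C})$. Your write-up merely makes the bookkeeping (the identification $(\mathfrak{G}/\mathfrak{H})_{\mathrm{ev}}=\operatorname{Sp}^*(J/\hspace{-1mm}/K)$ and the role of the naturality clause in Corollary \ref{C41}(1)) explicit, which is a faithful elaboration of the same argument.
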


Part 1 above is a restatement of Corollary \ref{C41} (1), while the rest is that of Theorem \ref{T42}. 

Recall $\mathsf{V}_{\mathbb{J}}=P(\mathbb{J})_1$. Let $\mathsf{V}_{\mathbb{K}}=P(\mathbb{K})_1$, and 
define $\mathsf{Q}:=\mathsf{V}_{\mathbb{J}}/\mathsf{V}_{\mathbb{K}}$, as in \eqref{E40b}. 
The isomorphisms obtained in Proposition \ref{P41} and Theorem \ref{T41} (1) are translated as follows. 

\begin{theorem}\label{TA2}
There is an $\mathfrak{H}$-equivariant isomorphism
\begin{equation}\label{EA1}
(\mathfrak{G}_{\operatorname{ev}}\times \operatorname{Sp}^*(\wedge(\mathsf{Q})))\times
^{\mathfrak{H}_{\operatorname{ev}}}\mathfrak{H}\simeq \mathfrak{G}
\end{equation}
of formal superschemes, which induces an isomorphism 
\begin{equation}\label{EA2}
\mathfrak{G}_{\operatorname{ev}}\times^{\mathfrak{H}_{\operatorname{ev}}} \operatorname{Sp}^*(\wedge(\mathsf{Q}))\simeq \mathfrak{G}/\mathfrak{H}
\end{equation}
of formal superschemes. 
\end{theorem}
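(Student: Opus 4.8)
The plan is to apply the category equivalence $\mathbb{C}\mapsto\operatorname{Sp}^*(\mathbb{C})$ to the super-coalgebra isomorphisms of Proposition \ref{P41} and Theorem \ref{T41}(1), after recording the dictionary that translates the relevant algebraic operations into geometric ones. Since $\operatorname{Sp}^*$ carries tensor products of super-coalgebras to direct products of formal superschemes, $\operatorname{Sp}^*(J\otimes\wedge(\mathsf{Q}))=\mathfrak{G}_{\operatorname{ev}}\times\operatorname{Sp}^*(\wedge(\mathsf{Q}))$; and by the equivalence between group-equivariant super-coalgebras and formal $\mathfrak{G}$-superschemes recalled above, a right $K$-module super-coalgebra corresponds to a right formal $\mathfrak{H}_{\operatorname{ev}}$-superscheme, and a right $\mathbb{K}$-module super-coalgebra to a right formal $\mathfrak{H}$-superscheme. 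The crucial entry is that the ``induction'' functor $\mathbb{D}\mapsto\mathbb{D}\otimes_K\mathbb{K}$ from right $K$-module super-coalgebras to right $\mathbb{K}$-module super-coalgebras is matched under $\operatorname{Sp}^*$ with the contracted product $\mathfrak{X}\mapsto\mathfrak{X}\times^{\mathfrak{H}_{\operatorname{ev}}}\mathfrak{H}$, where $\mathfrak{H}$ is regarded as a left formal $\mathfrak{H}_{\operatorname{ev}}$-superscheme by multiplication, matching the left $K$-action on $\mathbb{K}$; since $\operatorname{Sp}^*$ is an equivalence it is enough to observe that $\mathbb{D}\otimes_K\mathbb{K}$ is the coequalizer of $\mathbb{D}\otimes K\otimes\mathbb{K}\rightrightarrows\mathbb{D}\otimes\mathbb{K}$ in super-coalgebras and that $\mathfrak{X}\times^{\mathfrak{H}_{\operatorname{ev}}}\mathfrak{H}$ is the corresponding coequalizer of formal superschemes.

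With the dictionary in place, the two isomorphisms follow by transcription. Applying $\operatorname{Sp}^*$ to the right $\mathbb{K}$-linear super-coalgebra isomorphism $\Gamma^{\mathbb{K}}\colon(J\otimes\wedge(\mathsf{Q}))\otimes_K\mathbb{K}\xrightarrow{\ \simeq\ }\mathbb{J}$ of Proposition \ref{P41} --- in which $J\otimes\wedge(\mathsf{Q})$ carries the right $K$-action diagonal on $J$ and on the right adjoint action on $\mathsf{Q}$ --- yields at once the $\mathfrak{H}$-equivariant isomorphism \eqref{EA1}. To obtain \eqref{EA2} I would apply $-\otimes_{\mathbb{K}}\mathbb{K}/\mathbb{K}^+$ to $\Gamma^{\mathbb{K}}$, which geometrically is passage to the quotient by the $\mathfrak{H}$-action: the target becomes $\operatorname{Sp}^*(\mathbb{J}/\hspace{-1mm}/\mathbb{K})=\mathfrak{G}/\mathfrak{H}$ by definition, and the source becomes $\operatorname{Sp}^*\big((J\otimes\wedge(\mathsf{Q}))\otimes_K\Bbbk\big)$. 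As in the proof of Theorem \ref{T41}(1), $(J\otimes\wedge(\mathsf{Q}))\otimes_K\Bbbk$ is identified with $J\otimes_K\wedge(\mathsf{Q})$ by switching the side of the $K$-action on $\wedge(\mathsf{Q})$ from the right to the left adjoint action via the antipode, and $\operatorname{Sp}^*(J\otimes_K\wedge(\mathsf{Q}))=\mathfrak{G}_{\operatorname{ev}}\times^{\mathfrak{H}_{\operatorname{ev}}}\operatorname{Sp}^*(\wedge(\mathsf{Q}))$ by the dictionary ($\mathfrak{G}_{\operatorname{ev}}$ a right $\mathfrak{H}_{\operatorname{ev}}$-space by multiplication, $\operatorname{Sp}^*(\wedge(\mathsf{Q}))$ a left one via the left adjoint action). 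Since the underlying super-coalgebra isomorphism is precisely that of Theorem \ref{T41}(1), this produces \eqref{EA2} together with its compatibility with \eqref{EA1}.

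The hard part will be the dictionary entry for the contracted product: one must check that $\mathfrak{X}\times^{\mathfrak{H}_{\operatorname{ev}}}\mathfrak{H}$ is again a formal superscheme --- a filtered colimit of finite affine superschemes carrying the right fppf-sheaf structure --- and that $\operatorname{Sp}^*$ genuinely matches the algebraic induction with it; this is the super-analogue of the corresponding fact in Takeuchi's theory of formal schemes, and it rests, just as the very definition of $\mathfrak{G}/\mathfrak{H}$ does, on the correspondence between quotient super-coalgebras and quotients of formal superschemes. One must also keep careful track of the left/right and adjoint-action conventions, so that the $\mathfrak{H}_{\operatorname{ev}}$-actions appearing in \eqref{EA1} and \eqref{EA2} are literally the ones induced by the adjoint action on $\mathsf{Q}$. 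Once that bookkeeping is done, Theorem \ref{TA2} is a formal transcription of Proposition \ref{P41} and Theorem \ref{T41}(1).
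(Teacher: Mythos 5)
Your proposal is correct and follows essentially the same route as the paper: Theorem \ref{TA2} is obtained by applying the equivalence $\operatorname{Sp}^*$ to the isomorphism $\Gamma^{\mathbb{K}}$ of Proposition \ref{P41} for \eqref{EA1} and to the isomorphism of Theorem \ref{T41}~(1) for \eqref{EA2}. Note only that the ``hard part'' you single out is disposed of by fiat in the paper, since $\mathfrak{X}\times^{\mathfrak{F}}\mathfrak{Y}$ is \emph{defined} as $\operatorname{Sp}^*(\mathbb{X}\otimes_{\mathbb{F}}\mathbb{Y})$ (with the coequalizer description a consequence), so no separate verification that the contracted product is a formal superscheme matching algebraic induction is required.
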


In accordance with the fact (see Section \ref{Sec4}) that $\wedge(\mathsf{Q})$ is a coalgebra in $\mathsf{SMod}_K$
and in ${}_K\mathsf{SMod}$,  
we have regarded $\operatorname{Sp}^*(\wedge(\mathsf{Q}))$ as a right (for \eqref{EA1}) and left (for \eqref{EA2})
formal $\mathfrak{H}_{\operatorname{ev}}$-superschme,
where $\mathfrak{H}_{\operatorname{ev}}=\operatorname{Sp}^*(K)$.

The product $\times^{\mathfrak{H}_{\operatorname{ev}}}$ above is defined in general, as follows. 
Given a formal supergroup 
$\mathfrak{F}= \operatorname{Sp}^*(\mathbb{F})$, a right formal $\mathfrak{F}$-superscheme 
$\mathfrak{X}= \operatorname{Sp}^*(\mathbb{X})$ and a left formal $\mathfrak{F}$-superscheme 
$\mathfrak{Y}= \operatorname{Sp}^*(\mathbb{Y})$, we define $\mathfrak{X}\times^{\mathfrak{F}}\mathfrak{Y}$ by
\[
\mathfrak{X}\times^{\mathfrak{F}}\mathfrak{Y}:=\operatorname{Sp}^*(\mathbb{X}\otimes_{\mathbb{F}}\mathbb{Y}).
\] 
This superscheme is characterized by the co-equalizer diagram 
\[
\mathfrak{X}\times {\mathfrak{F}}\times \mathfrak{Y} \rightrightarrows \mathfrak{X}\times \mathfrak{Y}\to
\mathfrak{X}\times^{\mathfrak{F}}\mathfrak{Y}
\]
in the category of formal superschemes, where the paired arrows indicate the right $\mathfrak{F}$-action on 
$\mathfrak{X}$ and the left $\mathfrak{F}$-action on $\mathfrak{Y}$. 
In addition, for \eqref{EA1}, notice that 
$\mathfrak{X}\times^{\mathfrak{F}}\mathfrak{Y}$ is naturally a right
formal $\mathfrak{Y}$-superscheme,
in case $\mathfrak{Y}$ is a formal supergroup including $\mathfrak{F}$
as a formal sub-supergroup.

\begin{rem}
As analogues to Theorem \ref{TA2} above, 
Proposition 4.8 and Corollary 4.10 (see also Remark 4.13) of \cite{MT} prove results on affine algebraic supergroups, for which the situation is more complicated, so that the analogous isomorphisms hold only locally.
\end{rem}

\section*{Acknowledgments}
The authors thank the referee for helpful suggestions that improved the presentation of the paper.
The second-named author was supported by JSPS~KAKENHI, Grant Numbers 20K03552 and 23K03027.

\end{document}